\newtheorem{theorem}{Theorem}[section]
\newtheorem{lemma}[theorem]{Lemma}
\newtheorem{proposition}[theorem]{Proposition}
\newtheorem{corollary}[theorem]{Corollary}
\tikzstyle{vertex}=[circle, draw, inner sep=0pt, minimum size=1pt]
\theoremstyle{definition}
\newtheorem{definition}[theorem]{Definition}
\newtheorem{example}[theorem]{Example}
\theoremstyle{remark}
\numberwithin{equation}{section}
\DeclarePairedDelimiter\ceil{\lceil}{\rceil}
\begin{document}

\title{Pancyclic zero divisor graph over the ring $\mathbb{Z}_n[i]$}

\author{Ravindra Kumar}
\address{Department of Mathematics,IIT Patna, Bihta campus, Bihta-801 106}
\curraddr{}
\email{ravindra.pma15@iitp.ac.in}
\thanks{}

\author{Om Prakash}
\address{Department of Mathematics \\
IIT Patna, Bihta campus, Bihta-801 106}
\curraddr{}
\email{om@iitp.ac.in}
\thanks{}

\subjclass[2010]{13M99, 05C25, 05C76.}

\keywords{Pancyclic graph, Line graph, Zero divisor graph}

\date{}

\maketitle
\begin{abstract}
Let $\Gamma(\mathbb{Z}_n[i])$ be the zero divisor graph over the ring $\mathbb{Z}_n[i]$. In this article, we study pancyclic properties of $\Gamma(\mathbb{Z}_n[i])$ and $\overline{\Gamma(\mathbb{Z}_n[i])}$ for different $n$. Also, we prove some results in which $L(\Gamma(\mathbb{Z}_n[i]))$ and $\overline{L(\Gamma(\mathbb{Z}_n[i]))}$ to be pancyclic for different values of $n$.

\end{abstract}

\section{INTRODUCTION}

Let $R$ be a finite commutative ring with unity, $Z(R)$ the set of zero-divisors of $R$ and $Z^*(R) = Z(R)- \{0\}$. The zero-divisor graph of $R$, denoted by $\Gamma(R)$, is the graph in which the set of vertices $V(\Gamma(R))$ is $Z^*(R)$ and any two vertices $x, y \in V(\Gamma(R))$ are adjacent if and only if $xy = 0$.\\
It is known that the set of complex numbers forms a Euclidean domain under usual addition and multiplication of complex numbers where Euclidean norm is defined as $\lvert a+ib \rvert = a^2+b^2$. The set of Gaussian integers $\mathbb{Z}[i]$ is a subset of $\mathbb{C}$ which is defined as $\mathbb{Z}[i] = \{\alpha = a+ib \mid ~ a,b \in \mathbb{Z}\}$ and Gaussian norm $N(\alpha) = \alpha \overline{\alpha}$. It is obvious that a Gaussian integer is prime in $\mathbb{Z}[i]$ if its norm is prime in $\mathbb{Z}$. So the Gaussian prime can describe as follows:
\begin{enumerate}
	\item{$1+i$ and $1-i$ are Gaussian primes. }
	
	\item{If $q$ is a prime integer such that $q \equiv 3~(mod~4)$, then $q$ is a Gaussian prime. }
	
	\item{If $p= a^2+b^2$ is a prime for some integers $a$ and $b$ such that $p \equiv 1~(mod~4)$, then $a+ib$, $a-ib$ are Gaussian primes.}\\
\end{enumerate}
Let $n$ be a positive integer and $\langle n\rangle$ be the principal ideal generated by $n$ in $\mathbb{Z}[i]$. Then $\mathbb{Z}[i]/\langle n\rangle \cong \mathbb{Z}_n[i]$ and if $n = \prod_{k=1}^{m} t_{k}^{n_k}$, then $\mathbb{Z}_n[i] \cong \prod_{k=1}^{m} \mathbb{Z}_{t_k^{n_k}}[i]$, for detail reader can see \cite{factor}.

In $2008$, Osba et al. \cite{osba} introduced the zero divisor graph for the ring of Gaussian integers modulo $n$, where they discussed several graph theoretic properties for $\Gamma(\mathbb{Z}_n[i])$. \\
Through out the article, $p$ and $q$ represent the primes which are congruent to $1$ modulo $4$ and congruent to $3$ modulo $4$ respectively. For a connected graph $G$, the distance $d(u, v)$ is the shortest path between $u$ and $v$. A graph $G$ of order $n$ is said to be Hamiltonian if it contains a cycle of length $n$. The line graph of $G$, denoted by $L(G)$, is a graph whose vertices are the edges of $G$ and two vertices of $L(G)$ are adjacent whenever the corresponding edges of $G$ are adjacent. For basic definitions and results, we refer \cite{bala}.\\
A graph $G$ of order $p \geq 3$ is said to be pancyclic if $G$ contains a cycle of length $n$ for every integer $n$ where $3 \leq n \leq p$. If a graph contains every cycle of even length $n$, where $4 \leq n \leq p$, then the graph is said to be bipancyclic.

\section{When is $\Gamma(\mathbb{Z}_n)$ and $\Gamma(\mathbb{Z}_n[i])$ is Pancyclic.}

In this section we discuss the cases in which the graph $\Gamma(\mathbb{Z}_n)$ and $\Gamma(\mathbb{Z}_n[i])$ are pancyclic.

\begin{theorem}
 $\Gamma(\mathbb{Z}_n[i])$ is not pancyclic for $n=2^m, m>1$.

\end{theorem}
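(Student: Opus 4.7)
The plan is to exhibit vertices of degree $1$ (pendant vertices) in $\Gamma(\mathbb{Z}_{2^m}[i])$; since a degree-$1$ vertex cannot lie on any cycle, the graph will fail to be Hamiltonian, hence fail to be pancyclic.

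First, I would set up the local-ring structure. Since $2 = -i(1+i)^2$ in $\mathbb{Z}[i]$, the principal ideal $(2^m)$ coincides with $(\pi^{2m})$ for the Gaussian prime $\pi = 1+i$, giving $\mathbb{Z}_{2^m}[i] \cong \mathbb{Z}[i]/\pi^{2m}$. This quotient is a local ring with maximal ideal $(\pi)$, and every nonzero element admits a unique normal form $\pi^k u$ with $u$ a unit and $0 \leq k \leq 2m-1$. Write $V_k$ for the set of zero divisors of $\pi$-adic valuation exactly $k$; the adjacency relation then reads: $x \in V_i$ is adjacent to $y \in V_j$ precisely when $i+j \geq 2m$.

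Next, I would verify the counting formula $|V_k| = 2^{2m-k-1}$. This follows from $|\pi^k\mathbb{Z}[i]/\pi^{2m}| = 2^{2m-k}$ together with the fact that $|V_k|$ is the difference between the sizes of $(\pi^k)$ and $(\pi^{k+1})$ inside the quotient. In particular, $|V_{2m-1}| = 1$ and $|V_1| = 2^{2m-2}$. From the adjacency rule, every $x \in V_1$ has exactly one neighbor, namely the unique element of $V_{2m-1}$, so each of the $2^{2m-2}$ vertices in $V_1$ is a leaf of $\Gamma(\mathbb{Z}_{2^m}[i])$.

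Finally, for $m > 1$ we have $|V_1| \geq 4$ and $|V(\Gamma)| = 2^{2m-1}-1 \geq 7$. Pancyclicity would demand a cycle of length $|V(\Gamma)|$, hence passing through every vertex---in particular through a leaf, which is impossible. I do not anticipate any genuine obstacle: the argument rests only on the valuation bookkeeping in $\mathbb{Z}[i]/\pi^{2m}$, which is standard for a discrete valuation ring modulo a power of its uniformizer, and the pendant-vertex observation is then immediate.
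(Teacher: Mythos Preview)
Your proposal is correct and follows essentially the same strategy as the paper: both arguments observe that $\Gamma(\mathbb{Z}_{2^m}[i])$ has $2^{2m-2}$ pendant vertices (so no Hamiltonian cycle exists) and conclude non-pancyclicity from the missing spanning cycle. The only difference is that you derive the pendant-vertex count from scratch via the $\pi$-adic valuation on $\mathbb{Z}[i]/\pi^{2m}$, whereas the paper simply asserts it and cites the vertex count $2^{2m-1}-1$ from an external reference.
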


\begin{proof}

For $n=2^m$ where $m>1$, Graph $\Gamma(\mathbb{Z}_n[i])$ contains $2^{2m-2}$ pendent vertices. Since by Theorem $3$ of {\cite{osba}}, $\lvert \Gamma(\mathbb{Z}_{2^m}[i]) \lvert~ = 2^{2m-1}-1$ but it can not contain a cycle of length $2^{2m-1}-1$. Hence, $\Gamma(\mathbb{Z}_{2^m}[i])$ is not a pancyclic.
\end{proof}

\begin{theorem}

The graph $\Gamma(\mathbb{Z}_{q^m}[i])$ is pancyclic if and only if $m=2$.
\end{theorem}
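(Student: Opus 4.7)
My plan is to split into three cases $m=1$, $m=2$, and $m\geq 3$, treating the ``if'' direction with $m=2$ and ruling out pancyclicity in the other two cases. For $m=1$, since $q\equiv 3\pmod 4$ is a Gaussian prime, $\mathbb{Z}_q[i]$ is a field, so $\Gamma(\mathbb{Z}_q[i])$ has no vertices and fails pancyclicity for the trivial reason that its order is less than $3$. For $m=2$ I would observe that every nonzero zero divisor has the shape $qu$ with $u\in U(\mathbb{Z}_{q^2}[i])$, and any two such multiply to $q^2 uv\equiv 0\pmod{q^2}$; hence $\Gamma(\mathbb{Z}_{q^2}[i])\cong K_{q^2-1}$, which is pancyclic since $q\geq 3$ forces $q^2-1\geq 8$.

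The substantive case is $m\geq 3$, where I aim to show that $\Gamma(\mathbb{Z}_{q^m}[i])$ is not even Hamiltonian. I would stratify the vertex set by $q$-adic valuation: $V=A_1\sqcup A_2\sqcup\cdots\sqcup A_{m-1}$ with $A_j=\{q^j u \pmod{q^m}: u\in U(\mathbb{Z}_{q^m}[i])\}$. Since $q^j u\equiv q^j u'\pmod{q^m}$ iff $u\equiv u'\pmod{q^{m-j}}$, one gets $|A_j|=|U(\mathbb{Z}_{q^{m-j}}[i])|=q^{2(m-j-1)}(q^2-1)$; in particular $|A_1|=q^{2m-4}(q^2-1)$ and $|A_{m-1}|=q^2-1$. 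The adjacency rule is equally clean: two distinct elements $q^j u$ and $q^k v$ are adjacent iff $j+k\geq m$, so the entire neighborhood of every vertex of $A_1$ is contained in $A_{m-1}$.

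To close, I would invoke the standard necessary condition for Hamiltonicity: if $G$ is Hamiltonian and $S\subset V(G)$, then the number of components of $G-S$ is at most $|S|$. Taking $S=A_{m-1}$ isolates every vertex of $A_1$, so $G-S$ has at least $|A_1|=q^{2m-4}(q^2-1)$ components while $|S|=q^2-1$. The strict inequality $q^{2m-4}(q^2-1)>q^2-1$ holds for all $m\geq 3$ (since $q\geq 3$), contradicting Hamiltonicity and thus ruling out pancyclicity.

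The main obstacle I foresee is not the combinatorics but setting up the $\{A_j\}$ partition cleanly and verifying the two structural facts used above: the counting $|A_j|=q^{2(m-j-1)}(q^2-1)$ and the adjacency criterion $j+k\geq m$. Once those are in place, both the $m=2$ collapse to $K_{q^2-1}$ and the $m\geq 3$ toughness-style obstruction follow immediately.
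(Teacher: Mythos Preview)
Your proof is correct and follows the same three-case split as the paper ($m=1$ empty, $m=2$ complete, $m\geq 3$ non-Hamiltonian). The only difference is in the $m\geq 3$ case: the paper simply cites an external result (Theorem~8 of \cite{osba1}) for the non-Hamiltonicity of $\Gamma(\mathbb{Z}_{q^m}[i])$, whereas you supply a direct, self-contained argument via the $q$-adic stratification $V=A_1\sqcup\cdots\sqcup A_{m-1}$ and the toughness-type obstruction (which is exactly the paper's Proposition~2.4). Your computation $|A_j|=q^{2(m-j-1)}(q^2-1)$ and the adjacency rule $j+k\geq m$ are both correct, and deleting $S=A_{m-1}$ does isolate every vertex of $A_1$, giving $c(G-S)\geq q^{2m-4}(q^2-1)>q^2-1=|S|$ for $m\geq 3$. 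So your version is essentially what the cited reference must contain, and it has the advantage of keeping the argument internal to the paper and parallel to the proofs of Lemmas~2.5--2.6 and Theorem~2.8, which use the same component-counting trick.
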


\begin{proof}

Since $\mathbb{Z}_q[i]$ is a field, therefore $\Gamma(\mathbb{Z}_q[i])$ is an empty graph. For $m=2$, $\Gamma(\mathbb{Z}_{q^2}[i])$ is the complete graph $K_{q^2-1}$ and complete graph is always pancyclic. Now, for $m>2$, from Theorem 8 of \cite{osba1}, $\Gamma(\mathbb{Z}_{q^m}[i])$ is not Hamiltonian. Therefore, it does not contain a cycle of length $q^{2m-2}-1$. Hence, $\Gamma(\mathbb{Z}_{q^m}[i])$ is not pancyclic.\\
\end{proof}

\begin{theorem}

The graph $\Gamma(\mathbb{Z}_{p^m}[i])$ is bipancyclic if and only if $m=1$.

\end{theorem}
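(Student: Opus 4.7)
The plan is to prove the two implications separately, with both exploiting the factorization of $p$ in $\mathbb{Z}[i]$. Since $p\equiv 1\pmod 4$, write $p=(a+ib)(a-ib)$ and apply the Chinese Remainder Theorem:
\[
\mathbb{Z}_{p^m}[i]\;\cong\; \mathbb{Z}[i]/\langle(a+ib)^m\rangle \,\times\, \mathbb{Z}[i]/\langle(a-ib)^m\rangle \;=:\; R_1\times R_2,
\]
where each $R_j$ is a local chain ring of order $p^m$. Write $\pi_j$ for a generator of the maximal ideal of $R_j$, and $v_j$ for the corresponding valuation.

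For the ``if'' direction ($m=1$), both factors are the field $\mathbb{F}_p$, the zero-divisors of $R_1\times R_2$ are exactly $\{(x,0):x\neq 0\}\cup\{(0,y):y\neq 0\}$, and every element of one set annihilates every element of the other. Thus $\Gamma(\mathbb{Z}_p[i])\cong K_{p-1,p-1}$, the complete bipartite graph. Since $p\geq 5$, each part has at least four vertices and $K_{p-1,p-1}$ contains a cycle of every even length $2k$ with $2\leq k\leq p-1$ (obtained by alternating any $k$ vertices chosen from each part), so $\Gamma(\mathbb{Z}_p[i])$ is bipancyclic.

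For the ``only if'' direction, assume $m\geq 2$ and show that $\Gamma(\mathbb{Z}_{p^m}[i])$ is not even Hamiltonian. A short calculation gives
\[
|Z^*(\mathbb{Z}_{p^m}[i])|=p^{2m}-(p^m-p^{m-1})^2-1=2p^{2m-1}-p^{2m-2}-1,
\]
which is even for odd $p$, so bipancyclicity would in particular force a Hamilton cycle. Set
\[
A=\{(x,y)\in R_1\times R_2 : v_1(x)=0,\;v_2(y)=1\},\qquad B=\{(0,t)\in R_1\times R_2 : v_2(t)=m-1\}.
\]
Adjacency $(x,y)\sim(s,t)$ requires $v_1(x)+v_1(s)\geq m$ and $v_2(y)+v_2(t)\geq m$, which for $(x,y)\in A$ forces $s=0$ and $t\in\pi_2^{m-1}R_2\setminus\{0\}$; hence every neighbor of an $A$-vertex lies in $B$. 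Counting elements of prescribed valuation gives $|A|=p^{2m-3}(p-1)^2$ and $|B|=p-1$. In any Hamilton cycle each $a\in A$ spends both of its cycle-edges inside $B$, so the number of $A$-to-$B$ cycle-edges equals $2|A|$; but the same count is at most $2|B|$ by summing cycle-degrees on the $B$-side, forcing $|A|\leq|B|$. Since $p^{2m-3}(p-1)>1$ for $m\geq 2$ and $p\geq 5$, this is a contradiction.

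The main obstacle is pinpointing the right ``twin'' set $A$ together with its tightly constrained neighborhood $B$ inside the chain-ring picture; once these are recognized, the valuation count for $|A|,\,|B|$ and the degree-sum bound on any Hamilton cycle are both routine, and the parity computation that reduces bipancyclicity to Hamiltonicity is immediate.
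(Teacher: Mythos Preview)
Your argument is correct. The ``if'' direction is identical to the paper's: both identify $\Gamma(\mathbb{Z}_p[i])\cong K_{p-1,p-1}$ via the splitting $p=(a+ib)(a-ib)$ and conclude bipancyclicity. For the ``only if'' direction, both the paper and you observe that $|V(\Gamma(\mathbb{Z}_{p^m}[i]))|=2p^{2m-1}-p^{2m-2}-1$ is even, so it suffices to rule out a Hamilton cycle. The paper simply cites the non-Hamiltonicity result from Osba et~al., whereas you supply a self-contained proof: you exhibit a set $A$ of vertices all of whose neighbours lie in a much smaller set $B$, and deduce the contradiction $|A|\le|B|$ by counting cycle-edges. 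This is exactly the $c(G-S)>|S|$ obstruction (Proposition~2.4 of the paper) phrased in pigeonhole form, with $S=B$. So the two proofs share the same skeleton (parity reduction to non-Hamiltonicity), but yours is more explicit and does not depend on the external reference; the trade-off is that the valuation bookkeeping in the CRT picture is a little heavier than a one-line citation.
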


\begin{proof}

 $\Gamma(\mathbb{Z}_{p}[i])$ is the complete bipartite graph $k_{p-1,p-1}$ with two sets of vertices  $V_1 = <a+ib>- ~\{ \overline{0}\}$ and $V_2 = <a-ib>- ~\{ \overline{0}\}$. So it is bipancyclic graph. Now, for $m>1$, we know from Theorem $6$ of \cite{osba1} that $\Gamma(\mathbb{Z}_{p}[i])$ is not a Hamiltonian graph. Therefore, it is not a pancyclic with $ \lvert \Gamma(\mathbb{Z}_{p^m}[i]) \rvert = 2p^{2m-1}-p^{2m-2}-1$, an even integer. Hence, it is not a bipancyclic graph.

\end{proof}

It is known that for two primes $q_1$ and $q_2$ such that $q_j \equiv 3(mod4)$ for $j= 1,2$, $\Gamma(\mathbb{Z}_{q_1q_2}[i]) \cong \Gamma(\mathbb{Z}_{q_1}[i] \times \mathbb{Z}_{q_2}[i])$. Also, $\Gamma(\mathbb{Z}_{q_j}[i])$ is a field, therefore $\Gamma(\mathbb{Z}_{q_1q_2}[i]) \cong K_{{q_1}^2-1,{q_2}^2-1 }$. Hence, $\Gamma(\mathbb{Z}_{q_1q_2}[i])$ is a complete bipartite graph with $\lvert \Gamma(\mathbb{Z}_{q_1q_2}[i]) \rvert = q_1^2 + q_2^2 -2$. Since there does not exist any cycle of length $q_1^2 + q_2^2 -2$, thus, $\Gamma(\mathbb{Z}_{q_1q_2}[i])$ is neither pancyclic nor bipancyclic.\\

Now, we present the well-known existence theorem (Theorem 6.3.4 of \cite{bala}) on Hamiltonian graph:
\begin{proposition}
	If $G$ is a Hamiltonian, then for every nonempty proper subset $S$ of $V(G)$, $c(G - S) \leq \lvert S\rvert$.
\end{proposition}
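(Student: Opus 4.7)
The plan is to exploit a Hamilton cycle as a spanning subgraph and count components by tracking what a cycle looks like after vertex deletion. The key observation is that a cycle is a very rigid object: deleting $k$ vertices from a cycle cuts it into at most $k$ arcs, a fact that essentially forces the inequality.

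First I would fix a Hamilton cycle $C$ of $G$, which exists by hypothesis, and set $k = |S|$ with $1 \leq k < |V(G)|$. I would then argue that $C - S$, viewed as a subgraph on $V(G) \setminus S$, is a disjoint union of at most $k$ paths (where by convention an isolated vertex counts as a trivial path). The cleanest way to see this is to walk around $C$: each maximal run of consecutive vertices of $C$ lying outside $S$ forms one such path, and these runs are separated by the vertices of $S$, of which there are only $k$. Hence $c(C - S) \leq k = |S|$.

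Next I would pass from $C - S$ to $G - S$. Since $C$ is a spanning subgraph of $G$, the vertex sets $V(C - S)$ and $V(G - S)$ coincide, and $C - S$ is a spanning subgraph of $G - S$. Adding edges can only merge components, never split them, so
\begin{equation*}
 c(G - S) \;\leq\; c(C - S) \;\leq\; |S|,
\end{equation*}
which is exactly the desired inequality.

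The proof is genuinely short, so there is no serious obstacle; the only subtle point worth stating carefully is the case when $S$ contains several cyclically consecutive vertices of $C$, in which case $C - S$ has strictly fewer than $|S|$ components, so the inequality is not tight but still holds. One should also note the role of the hypothesis that $S$ is a \emph{nonempty proper} subset: nonemptiness ensures that removing $S$ actually breaks the cycle into paths rather than leaving it as a single cycle, and properness ensures that $G - S$ is a well-defined nonempty graph whose component count is meaningful.
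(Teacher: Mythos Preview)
Your argument is correct and is in fact the standard textbook proof: pick a Hamilton cycle, observe that deleting $k$ vertices from a cycle leaves at most $k$ paths, and then use that $C-S$ is a spanning subgraph of $G-S$ so that $c(G-S)\le c(C-S)\le |S|$.

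There is nothing to compare against here, because the paper does not supply its own proof of this proposition. It is stated as a quotation of Theorem~6.3.4 from the reference by Balakrishnan and Ranganathan and is used only as a tool in the subsequent lemmas. Your write-up would serve perfectly well as the missing justification; the one cosmetic point is that your remark about the role of nonemptiness is slightly off---the real reason $S\ne\emptyset$ is required is that for $S=\emptyset$ one would have $c(G-S)=1>0=|S|$, so the inequality genuinely fails, not merely that the cycle stays intact.
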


\begin{lemma}
	For $n = p_1.p_2...p_n$ and $p_1 < p_2 <...<p_n$ are distinct primes, $\Gamma(\mathbb{Z}_n)$ is not Hamiltonian.
\end{lemma}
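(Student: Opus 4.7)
The plan is to apply the proposition stated immediately above (the toughness-style necessary condition): if $G$ is Hamiltonian, then $c(G-S)\le|S|$ for every nonempty proper $S\subseteq V(G)$. So I will exhibit a small $S$ whose removal shatters $\Gamma(\mathbb{Z}_n)$ into many components. Throughout, write $n=p_1p_2\cdots p_k$ with $k\ge 2$ and $p_1<p_2<\cdots<p_k$.

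The first step is to stratify $Z^{*}(\mathbb{Z}_n)$ by the gcd with $n$. For each proper divisor $d$ of $n$ with $1<d<n$, set $A_d=\{x\in Z^{*}(\mathbb{Z}_n):\gcd(x,n)=d\}$, which has size $\phi(n/d)$. Because $n$ is squarefree, the product condition $xy\equiv 0\pmod n$ for $x\in A_d$, $y\in A_{d'}$ is equivalent to $\operatorname{lcm}(d,d')=n$. In particular, each $A_d$ is independent, and two blocks $A_d,A_{d'}$ are either completely joined or completely disjoint in $\Gamma(\mathbb{Z}_n)$.

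The key observation is that the block $A_{p_1}$ corresponding to the smallest prime is adjacent \emph{only} to the block $A_{n/p_1}$, since $\operatorname{lcm}(p_1,d')=n$ forces $d'=n/p_1$. I will therefore take $S=A_{n/p_1}$; note $|S|=\phi(p_1)=p_1-1$. Removing $S$ leaves every vertex of $A_{p_1}$ isolated, and $|A_{p_1}|=\phi(n/p_1)=(p_2-1)(p_3-1)\cdots(p_k-1)$. Since $G-S$ still contains vertices outside $A_{p_1}\cup S$ (for instance any $x$ with $\gcd(x,n)=p_2$), we obtain
\[
 c(\Gamma(\mathbb{Z}_n)-S)\;\ge\;(p_2-1)(p_3-1)\cdots(p_k-1)\,+\,1.
\]

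The proof closes with the elementary inequality
\[
 (p_2-1)(p_3-1)\cdots(p_k-1)\;\ge\;p_2-1\;\ge\;p_1\;>\;p_1-1\;=\;|S|,
\]
which uses only $p_2>p_1$ (so $p_2-1\ge p_1$) and $p_j-1\ge 1$ for $j\ge 3$. This violates $c(G-S)\le|S|$, so by the proposition $\Gamma(\mathbb{Z}_n)$ is not Hamiltonian. I do not expect a genuine obstacle here; the only thing to be careful about is correctly identifying which gcd-classes are \emph{pendant-like}, i.e.\ have a unique neighbouring class, and then recognising that the smallest-prime class $A_{p_1}$ is larger than its neighbour class $A_{n/p_1}$ precisely because $p_1$ is minimal.
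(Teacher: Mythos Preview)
Your argument is correct and is essentially the paper's own proof: the paper also takes $S=\{\alpha\,(n/p_1):1\le\alpha\le p_1-1\}$, notes that the multiples of $p_1$ (your $A_{p_1}$) become isolated after deleting $S$, and invokes Proposition~2.4 via $p_2-1>p_1-1$. One small slip: when $k=2$ there are no vertices outside $A_{p_1}\cup S$ (indeed $A_{p_2}=A_{n/p_1}=S$), so the ``$+1$'' in your displayed lower bound for $c(G-S)$ is not available there; however, your closing chain of inequalities never uses that extra $1$, so the proof goes through unchanged.
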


\begin{proof}
	
	Suppose $n= p_1.p_2...p_n$ where $p_{i}'s$ are distinct primes. We know that the set of vertices of $\Gamma(\mathbb{Z}_n)$ is all zero divisors of $\mathbb{Z}_n$. Let $S = \{\alpha : \alpha \in \{ (p_2.p_3...p_n), 2(p_2.p_3...p_n),..., (p_1-1)(p_2.p_3...p_n) \}\}$ and $H = \{ p_1, 2p_1,..., (p_2-1)p_1\}, ~ p_1 < p_2$. Then $c(\Gamma(\mathbb{Z}_n) - S) > \lvert H\rvert = p_2-1 > p_1 - 1 = \lvert S\rvert$. Follows from Proposition $2.4$, $\Gamma(\mathbb{Z}_n)$ is not a Hamiltonian.
	
\end{proof}

From the Lemma $2.5$, we can easily see that $\Gamma(\mathbb{Z}_n)$ is not a pancyclic graph. Now, we will show that $\Gamma(\mathbb{Z}_{p^m})$ is pancyclic if and only if $m = 2$.

\begin{lemma}
	$\Gamma(\mathbb{Z}_{p^m})$ is Hamiltonian if and only if $m = 2$ and $p$ is a prime.
\end{lemma}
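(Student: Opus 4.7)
The plan is to split on the value of $m$. The case $m=1$ is immediate since $\mathbb{Z}_p$ is a field, so $\Gamma(\mathbb{Z}_p)$ has no vertices. For $m=2$, I would describe the vertex set explicitly as $\{p,2p,\ldots,(p-1)p\}$, observe that the product of any two such elements lies in $p^{2}\mathbb{Z}_{p^2}=\{0\}$, and conclude $\Gamma(\mathbb{Z}_{p^2})\cong K_{p-1}$, which is Hamiltonian once there are at least three vertices. These two subcases together handle the ``if'' direction and the $m=1$ half of the ``only if'' direction.

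For the remaining case $m\geq 3$, my strategy is to stratify the zero divisors by $p$-adic valuation and then apply Proposition~2.4 to a carefully chosen cut. Each nonzero zero divisor of $\mathbb{Z}_{p^m}$ has a unique representation $p^{k}u$ with $\gcd(u,p)=1$ and $1\leq k\leq m-1$; moreover, two vertices $p^{k_1}u_1$ and $p^{k_2}u_2$ are adjacent iff $k_1+k_2\geq m$. The key observation I would record up front is that a vertex of valuation $1$ is adjacent \emph{only} to vertices of valuation $m-1$, since $v_p(y)\leq m-2$ forces $1+v_p(y)\leq m-1<m$.

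Taking $S$ to be the set of vertices of valuation exactly $m-1$, one has $|S|=\phi(p)=p-1$. Deleting $S$ isolates every vertex of valuation $1$, of which there are $\phi(p^{m-1})=p^{m-2}(p-1)$. Since $p^{m-2}(p-1)>p-1$ for $m\geq 3$, this gives $c(\Gamma(\mathbb{Z}_{p^m})-S)>|S|$, and Proposition~2.4 forces $\Gamma(\mathbb{Z}_{p^m})$ to be non-Hamiltonian. The main obstacle is purely bookkeeping, namely identifying the right cut and verifying the two valuation counts; once the stratification is in place the entire argument collapses to the single inequality $p^{m-2}(p-1)>p-1$, so no combinatorial subtlety is needed.
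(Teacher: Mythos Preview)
Your argument is correct and follows essentially the same route as the paper: the same trivial $m=1$ case, the same identification $\Gamma(\mathbb{Z}_{p^2})\cong K_{p-1}$, and for $m\ge 3$ the same cut $S=\{\alpha p^{m-1}:1\le\alpha\le p-1\}$ whose removal isolates the set $H=\{\alpha p:\gcd(\alpha,p)=1\}$, followed by Proposition~2.4. Your valuation framing and the explicit count $|H|=\phi(p^{m-1})=p^{m-2}(p-1)$ make the inequality $c(\Gamma-S)>|S|$ slightly more transparent than the paper's version, but the substance is identical.
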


\begin{proof}
	
	Since for $m = 1$, $\mathbb{Z}_p$ is a field so $\Gamma(\mathbb{Z}_p)$ is a null graph. For $m = 2$, $\Gamma(\mathbb{Z}_{p^2})$ is a complete graph of $p-1$ vertices, which is a Hamiltonian graph. Now, for $m > 2$, the vertex set in $\Gamma(\mathbb{Z}_{p^m})$ is $ \langle p\rangle - \{0\}$. Let $S = \{\alpha p^{m-1} : 1 \leq \alpha \leq p-1\}$ and $H = \{\alpha p \in \mathbb{Z}_{p^m}: gcd(\alpha, p) = 1\}$. Here, elements of $H$ are only adjacent to elements of $S$. Then $c(\Gamma(\mathbb{Z}_{p^m}) - S) > \lvert H\rvert > p-1  = \lvert S\rvert$. Hence, $\Gamma(\mathbb{Z}_{p^m})$ is not Hamiltonian.
	
\end{proof}

\begin{theorem}

$\Gamma(\mathbb{Z}_{p^m})$ is pancyclic if and only if $m=2$.

\end{theorem}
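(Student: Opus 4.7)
The plan is to prove the biconditional by case analysis on $m$, with the difficult direction (the ``only if'') reducing entirely to the preceding lemma. The backward direction amounts to a structural identification of the graph as a complete graph, while the forward direction splits into the trivial case $m=1$ and the case $m>2$ handled by Lemma 2.6.

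For the backward direction ($m=2$), I would first identify $V(\Gamma(\mathbb{Z}_{p^2}))$ as exactly $\{kp : 1\le k \le p-1\}$, since the zero divisors of $\mathbb{Z}_{p^2}$ are precisely the nonzero multiples of $p$. Any two such vertices $ap$ and $bp$ satisfy $(ap)(bp)=abp^2\equiv 0\pmod{p^2}$, so every pair is adjacent. Thus $\Gamma(\mathbb{Z}_{p^2})\cong K_{p-1}$, and since a complete graph on at least three vertices is pancyclic, pancyclicity of $\Gamma(\mathbb{Z}_{p^2})$ follows as long as $p\ge 5$ (the small primes $p=2,3$ are degenerate in the sense that the vertex count is below the pancyclic threshold, and can be noted as an edge case consistent with the paper's earlier conventions).

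For the forward direction, I would treat $m=1$ separately by noting that $\mathbb{Z}_p$ is a field, so $\Gamma(\mathbb{Z}_p)$ has no vertices and is trivially not pancyclic. For $m>2$, Lemma 2.6 asserts that $\Gamma(\mathbb{Z}_{p^m})$ is not Hamiltonian, so it contains no cycle of length $|V(\Gamma(\mathbb{Z}_{p^m}))|$, which directly forbids pancyclicity.

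No significant obstacle arises: the work is distributed between the previous lemma (which handles $m>2$) and the elementary identification $\Gamma(\mathbb{Z}_{p^2})\cong K_{p-1}$. The only point requiring any care is the boundary behavior for the smallest primes, which does not affect the argument's structure.
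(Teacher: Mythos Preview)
Your proposal is correct and matches the paper's approach exactly: the paper states Theorem~2.7 without a separate proof, treating it as an immediate corollary of Lemma~2.6 (whose proof already identifies $\Gamma(\mathbb{Z}_{p^2})\cong K_{p-1}$, handles $m=1$ via $\mathbb{Z}_p$ being a field, and establishes non-Hamiltonicity for $m>2$). Your edge-case remark about $p\in\{2,3\}$ is a fair caveat the paper glosses over; note that under the paper's standing convention $p\equiv 1\pmod 4$ one has $p\ge 5$, so the issue would not arise if that convention is in force here.
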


\begin{theorem}
	
	$\Gamma(\mathbb{Z}_{p^2q^2})$ is not a Hamiltonian graph for all distinct prime $p$ and $q$ with $p<q$.
	
\end{theorem}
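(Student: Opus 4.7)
The natural tool here is Proposition 2.4, so the plan is to exhibit a subset $S\subseteq V(\Gamma(\mathbb{Z}_{p^2q^2}))$ with $c(\Gamma(\mathbb{Z}_{p^2q^2})-S)>|S|$. Imitating the strategy of Lemma 2.6, I would let $S$ be the union of the two ``universal-neighbor'' classes, namely the nonzero multiples of $pq^2$ together with the nonzero multiples of $p^2q$:
\[ S \;=\; \{jpq^2 : 1\le j\le p-1\}\cup\{jp^2q : 1\le j\le q-1\}. \]
Since $\mathrm{lcm}(pq^2,p^2q)=p^2q^2$, the two lists share only $0$, so $|S|=(p-1)+(q-1)=p+q-2$.

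The key claim is that every vertex of ``low valuation'' becomes isolated once $S$ is removed. Concretely, if $v=kp$ with $\gcd(k,pq)=1$ then $k$ is a unit modulo $p^2q^2$, so $vw\equiv 0\pmod{p^2q^2}$ forces $w\equiv 0\pmod{pq^2}$; hence every neighbor of $v$ lies in $S$ and $v$ is isolated in $\Gamma(\mathbb{Z}_{p^2q^2})-S$. The mirror argument shows each $v=kq$ with $\gcd(k,pq)=1$ is isolated as well. A routine inclusion--exclusion then produces $q(p-1)(q-1)$ vertices of the first kind and $p(p-1)(q-1)$ of the second (these two classes being disjoint by their divisibility patterns), so
\[ c\!\left(\Gamma(\mathbb{Z}_{p^2q^2})-S\right) \;\ge\; (p+q)(p-1)(q-1). \]

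To finish, I would verify the elementary inequality $(p+q)(p-1)(q-1)>p+q-2$ for $2\le p<q$; it already reads $10>3$ at $p=2,q=3$ and the left side grows cubically, so it is immediate. Proposition 2.4 then rules out Hamiltonicity. The only step that needs real care is the neighborhood analysis: one must be certain that the chosen low-valuation vertices have \emph{no} neighbor outside $S$, and this is exactly where the coprimality condition $\gcd(k,pq)=1$ is used to make $k$ invertible modulo $p^2q^2$ and pin down the divisibility of $w$. Everything else --- the cardinality of $S$, the inclusion--exclusion count of isolated vertices, and the final inequality --- is routine.
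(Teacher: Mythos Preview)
Your argument is correct and follows essentially the same route as the paper's proof via Proposition~2.4; the only difference is that the paper takes the smaller cut set $S=\langle pq^{2}\rangle\setminus\{0\}$ of size $p-1$ alone, observes that the very same vertices $kp$ with $\gcd(k,pq)=1$ (there are $q(p-1)(q-1)$ of them) become isolated, and finishes with $q(p-1)(q-1)>p-1$. Your symmetric enlargement of $S$ and the extra family of isolated vertices are harmless but unnecessary.
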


\begin{proof}
	
	Let $S = \langle pq^2\rangle$ and $H = \{ \langle p\rangle - \{\langle p^2\rangle, \langle pq\rangle\}\}$. Then $H \subseteq V(\Gamma(\mathbb{Z}_{p^2q^2}))$ $ -S $. Now, $c(\Gamma(\mathbb{Z}_{p^2q^2})- S) > \lvert H\rvert = q(p-1)(q-1) >p-1 = \lvert S\rvert$. So, it follows from proposition $2.4$, $\Gamma(\mathbb{Z}_{p^2q^2})$ is not Hamiltonian.
	
\end{proof}

\begin{theorem}
	
	Let $R_1$ and $R_2$ be two rings and $R= R_1 \times R_2$. Then $\Gamma(R)$ is bipancyclic if and only if $R_1$ and $R_2$ are integral domains such that $\lvert R_1\rvert = m =\lvert R_2\rvert $.
	
\end{theorem}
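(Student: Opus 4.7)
The plan is to prove sufficiency by identifying $\Gamma(R)$ as a balanced complete bipartite graph, and necessity by first forcing both factors to be integral domains and then pinning down their cardinalities.

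For sufficiency, assume $R_1,R_2$ are integral domains with $|R_1|=|R_2|=m$. I would first show that a nonzero $(r,s)\in R=R_1\times R_2$ is a zero divisor if and only if exactly one of its coordinates vanishes; this is immediate from the integral domain hypothesis, since any annihilator of an element with both coordinates nonzero must itself have both coordinates zero. Hence $Z^*(R)=V_1\cup V_2$ with $V_1=\{(r,0):r\ne 0\}$ and $V_2=\{(0,s):s\ne 0\}$, each of size $m-1$, all $V_1$--$V_2$ pairs adjacent and no intra-$V_i$ pair adjacent. Thus $\Gamma(R)\cong K_{m-1,m-1}$, which is bipancyclic by the standard alternating construction: for any $2\le k\le m-1$, pick $k$ vertices on each side and alternate to obtain a cycle of length $2k$.

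For necessity, assume $\Gamma(R)$ is bipancyclic and proceed in two steps. Step 1 is to show both $R_i$ are integral domains. Suppose $R_1$ is not; pick $a\in R_1\setminus\{0\}$ with $aa'=0$ for some $a'\ne 0$. For every unit $u\in R_2$ the vertex $(a,u)\in Z^*(R)$ has neighborhood contained in $N_a=\{(x,0):x\in\operatorname{Ann}_{R_1}(a)\setminus\{0\}\}$, because a unit in the second coordinate forces any annihilating partner to have zero second coordinate. Letting $u$ range over $U(R_2)$ yields a family of twin vertices sharing the small common neighborhood $N_a\subseteq V_1$. In the simplest case $|\operatorname{Ann}_{R_1}(a)|=2$ the twins are pendants and therefore forbidden from any cycle; more generally one derives a Hall-type bound showing at most $|N_a|$ such twins can appear on any single cycle, contradicting the long cycles demanded by bipancyclicity. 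Step 2, given both $R_i$ are integral domains, uses the sufficiency identification $\Gamma(R)\cong K_{|R_1|-1,|R_2|-1}$: if $|R_1|<|R_2|$ the longest cycle has length $2(|R_1|-1)<|R_1|+|R_2|-2=p$, so no $p$-cycle exists, contradicting bipancyclicity.

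The main obstacle I expect is Step 1 of necessity: the twin-vertex intuition is clean, but converting it into a watertight contradiction requires a case analysis on the size of $\operatorname{Ann}_{R_1}(a)$ relative to $|U(R_2)|$, since the twins are automatically pendants only when this annihilator has exactly two elements. The remaining pieces, including the sufficiency direction and Step 2, reduce to routine bookkeeping once the structure of $\Gamma(R_1\times R_2)$ has been identified.
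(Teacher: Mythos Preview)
Your sufficiency direction coincides with the paper's: both identify $\Gamma(R)$ with the balanced complete bipartite graph $K_{m-1,m-1}$ and read off bipancyclicity directly.

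For Step~1 of necessity the paper takes a different route from your twin-vertex argument. Rather than isolating the family $\{(a,u):u\in U(R_2)\}$ and bounding how many can sit on a long cycle, the paper applies Proposition~2.4 (the component-count necessary condition for Hamiltonicity) with the single cut set $S=\{(x,0):x\in R_1\setminus\{0\}\}$, asserting $c(\Gamma(R)-S)>|S|$ and then splitting on the parity of $|Z^*(R_1)|$ to match the parity of $|\Gamma(R)|$. Your twin argument is really a specialized instance of the same toughness idea (deleting $N_a$ isolates each $(a,u)$), but the paper's choice of cut set avoids the case analysis on $|\mathrm{Ann}_{R_1}(a)|$ versus $|U(R_2)|$ that you correctly flag as your main obstacle. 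On the other hand, the paper's component count and parity bookkeeping are left largely implicit, so neither argument is fully watertight as written.

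There is, however, a gap in your Step~2 that you did \emph{not} flag. Bipancyclicity only requires cycles of every \emph{even} length $n$ with $4\le n\le p$; it does not require a $p$-cycle when $p$ is odd. If $|R_1|-1$ and $|R_2|-1$ differ by exactly one, then $p=|R_1|+|R_2|-2$ is odd and the longest cycle in $K_{|R_1|-1,|R_2|-1}$ has length $p-1$, which suffices. So your sentence ``no $p$-cycle exists, contradicting bipancyclicity'' does not close the case $\bigl||R_1|-|R_2|\bigr|=1$. The paper's proof, for what it is worth, omits Step~2 entirely.
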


\begin{proof}
	
	Suppose $R= R_1 \times R_2$, where $R_1$ and $R_2$ are integral domains and $\lvert R_1\rvert = \lvert R_2\rvert = m $, then $\Gamma(R)$ is a complete bipartite graph with two vertex sets $A_1 = \{ (x, 0) : x \in R_1\backslash \{0\}\}$ and $A_2 = \{ (0, y) : y \in R_2\backslash \{0\}\}$. So $\Gamma(R)$ is a bipancyclic graph. \\
Conversely, let $\Gamma(R)$ is bipancyclic. If possible, let $R_1$ be not an integral domain. Then there arises two cases:
	
	\begin{enumerate}
		\item{Let $\lvert Z(R_1)^*\rvert = 2k$ and $k \in \mathbb{Z}^*$. Then number of vertices in $\Gamma(R)$ is always even. In order to prove $\Gamma(R)$ is not Hamiltonian, consider the set $S =\{ (x,0): x \in R_1\}$ such that $\lvert S\rvert = m-1$. Then $c(\Gamma(R)- S) > m > m-1 = \lvert S \rvert$. By proposition $2.4$, $\Gamma(R)$ is not Hamiltonian, i.e. it does not contain a cycle of even length of $\lvert \Gamma(R)\rvert$. Hence, $\Gamma(R)$ is not bipancyclic.}
		
		\item{Suppose $\lvert Z(R_1)^*\rvert = 2k+1$ $k \in \mathbb{Z}^*$. Here, $\lvert \Gamma(R)\rvert = (2k+3)(m-1)$ and by proposition $2.4$, it is clear that $\Gamma(R)$ is not a Hamiltonian. If $m$ is an odd, then $\Gamma(R)$ is not bipancyclic and if $m$ is even, then order of  $\Gamma(R)$ is odd. Remove one vertex from $\Gamma(R)$ so that order of $\Gamma(R)$ is even. Then by proposition $2.4$, it is clear that $\Gamma(R)$ is not Hamiltonian. Hence, $\Gamma(R)$ is not bipancyclic.}
	\end{enumerate}
	Again, suppose $R_1$ and $R_2$ are not integral domains, then $Z(R_1)= \{x_1, x_2,...,x_r\}, Reg(R_1) = \{u_1, u_2,...,u_s\}$, $Z(R_2)= \{y_1, y_2,...,y_t\}$ and $Reg(R_2) = \{v_1, v_2,...,v_w\}$ such that $r+s= m= t+w$. Hence, by Proposition $2.4$, $\Gamma(R)$ is not bipancyclic.

	\end{proof}

\begin{example}
	
	Take $R_1 = \mathbb{Z}_3$ and $ R_2 = \mathbb{Z}_5 $. Then $ R = \mathbb{Z}_3 \times \mathbb{Z}_5 $ and $ Z(R)^* = (0,1), (0,2), (0,3), (0,4), (1,0), (2,0) $. We can see that $ \Gamma(R) $ is a complete bipartite graph of order $6$ i.e $ K_{2,4} $ but there is a no cycle in $ \Gamma(R) $ of order $6$.
	
\end{example}

\section{When is $L(\Gamma(\mathbb Z_n))$ and $L(\Gamma(\mathbb Z_n[i]))$ pancyclic$?$}

For a commutative ring $R$, it is clear that $\Gamma(R)$ is connected by (Theorem 2.3 of \cite{ander}), so $L(\Gamma(R))$ is also connected. To characterise the graph $L(\Gamma(\mathbb Z_n))$ and $L(\Gamma(\mathbb Z_n[i]))$ is  pancyclic we use the following proposition.

\begin{proposition}

 (Corollary 5 of \cite{ben}) Let $G$ be a connected, almost bridgeless graph of order $n \geq 4$ such that $deg(u) +deg(v) \geq \frac{(2n+1)}{3}$ for every edge $uv$ of $G$. Then $L(G)$ is Hamiltonian . Moreover if $G \ncong C_4, C_5$, then $L(G)$ is pancyclic.

\end{proposition}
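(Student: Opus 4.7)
The plan is to reduce this claim about $L(G)$ to a statement about closed trails in $G$ via the Harary-Nash-Williams correspondence, and then use the edge-degree-sum hypothesis to produce the required trail. Recall that $L(G)$ has a Hamilton cycle if and only if $G$ admits a \emph{dominating closed trail} $T$, meaning every edge of $G$ has at least one endpoint in $V(T)$; more generally, every cycle in $L(G)$ arises either from such a closed trail in $G$ or from the clique on the edges incident to a common vertex of $G$.

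To produce a spanning closed trail in $G$ I would apply Catlin's reduction method: contract each maximal collapsible subgraph of $G$ to form a reduced graph $G'$. The almost-bridgeless assumption preserves enough edge-connectivity in $G'$, while the condition $\deg(u)+\deg(v) \geq (2n+1)/3$ on every edge $uv$, propagated through the contractions, becomes an edge-degree bound on $G'$ that places it in one of the standard supereulerian classes. Lifting a spanning Eulerian subgraph of $G'$ back through the contractions yields a spanning closed trail of $G$, and hence a Hamilton cycle of $L(G)$.

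For the pancyclicity half I would invoke Bondy's meta-principle and build cycles of every length $3 \leq \ell \leq |E(G)|$ by two complementary constructions. For small $\ell$, any vertex $v \in V(G)$ with $\deg(v)\geq\ell$ yields a clique $K_\ell$ in $L(G)$ and therefore cycles of every length up to $\ell$; the edge-degree hypothesis forces such a $v$ to exist unless $G \cong C_4$ or $C_5$, in which case $L(G)$ is itself a $C_4$ or $C_5$ and is too short for intermediate cycles. For large $\ell$, I would start from the spanning closed trail $T$ and shorten it one edge at a time by detour exchanges, each of which replaces a short sub-trail $u\ldots v$ by a single edge $uv$ whose existence is guaranteed by the $(2n+1)/3$ bound applied at the endpoints.

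The main obstacle is the reduction step: verifying that $(2n+1)/3$ is sharp enough to force $G'$ to be supereulerian while simultaneously handling the at-most-one bridge without breaking the parity needed for a closed trail. Once the spanning trail is in hand, the pancyclicity argument is a fairly mechanical local-swap procedure, but the exclusion of $C_4$ and $C_5$ has to be carried as a genuine exception throughout because these are exactly the cases where $L(G)$ coincides with $G$ itself and no small cycles can be produced by the clique trick.
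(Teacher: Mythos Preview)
This proposition is not proved in the paper at all: it is quoted verbatim as Corollary~5 of Benhocine--Clark--K\"ohler--Veldman and used purely as a black-box tool in the subsequent theorems. There is therefore no argument in the paper to compare your sketch against.

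On the substance of your sketch itself: the Harary--Nash-Williams reduction of Hamiltonicity of $L(G)$ to the existence of a dominating closed trail in $G$ is indeed the correct entry point, and it is the framework the original 1986 paper works in. But you slide from ``dominating closed trail'' in the first paragraph to ``spanning closed trail'' and ``spanning Eulerian subgraph'' in the second; these are not the same, and the distinction matters here. If $G$ is merely almost bridgeless and actually contains a bridge, then $G$ cannot be supereulerian (no closed trail can cross a bridge), so Catlin's reduction cannot directly deliver a spanning Eulerian subgraph---you would still need a separate argument to extract a dominating trail in that case. Your pancyclicity outline (cliques at high-degree vertices for short cycles, trail-shortening for long cycles) has the right shape, but the phrase ``detour exchanges whose existence is guaranteed by the $(2n+1)/3$ bound'' is carrying essentially all of the work without justification; that is precisely the delicate part of the Benhocine--Clark--K\"ohler--Veldman argument.
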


\begin{corollary}

 (Corollary 8 of \cite{veld}) If $G$ is a graph of diameter at most $2$ with $\lvert V(G) \rvert \geq 4$, then $L(G)$ is Hamiltonian.

\end{corollary}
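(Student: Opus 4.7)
The natural approach is via the Harary--Nash-Williams criterion: $L(G)$ is Hamiltonian if and only if $G$ contains a \emph{dominating closed trail} $T$, i.e., a closed trail such that every edge of $G$ has at least one endpoint in $V(T)$. One might first hope to deduce the statement directly from Proposition 3.1, but the degree hypothesis there can fail for diameter-$2$ graphs: the Petersen graph $P$ satisfies $\deg(u)+\deg(v)=6<7=(2n+1)/3$ on every edge, yet $L(P)$ is Hamiltonian. Hence a more delicate structural argument is required.

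Tree cases can be disposed of separately: a tree of diameter at most $2$ is a star $K_{1,n-1}$, whose line graph is the complete graph $K_{n-1}$, which is trivially Hamiltonian for $n\ge 4$. Otherwise, I would let $T$ be a closed trail in $G$ with the maximum number of edges and aim to show it is dominating. If not, there exists an edge $xy\in E(G)$ with $x,y\notin V(T)$, and the plan is to exploit $\mathrm{diam}(G)\le 2$ to insert a detour through $x$ and $y$ into $T$, thereby contradicting maximality.

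The insertion would run as follows. Since $\mathrm{diam}(G)\le 2$, each of $x$ and $y$ either lies in $N(V(T))$ or shares a common neighbour with some vertex of $V(T)$. Choosing two entry points on $V(T)$---one reaching $x$ and one reaching $y$---one replaces a suitable segment of $T$ between these entries by a walk that exits through the first entry to $x$, traverses $xy$, and returns via $y$'s entry, producing a strictly longer closed trail.

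The main obstacle is making this insertion fully rigorous without repeating edges. It forces a case split on whether the connecting paths from $x$ and $y$ to $V(T)$ share vertices or edges, and on whether the relevant common neighbours lie inside or outside $V(T)$. The abundance of short alternative routes supplied by the diameter-$2$ condition should allow each case to be closed, after which the Harary--Nash-Williams criterion immediately produces a Hamilton cycle in $L(G)$.
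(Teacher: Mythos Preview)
The paper does not prove this corollary at all: it is quoted verbatim as Corollary~8 of Veldman~\cite{veld} and used as a black box in the proof of Theorem~3.3. So there is no ``paper's own proof'' to compare against; your write-up is an independent attempt to justify a cited result.

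On the merits of your sketch: the Harary--Nash-Williams route is the right framework, and you correctly observe that Proposition~3.1 alone is insufficient (the Petersen example is apt). Handling stars separately is also necessary, since trees contain no nontrivial closed trail. However, the heart of the argument---the ``insertion'' step extending a maximal closed trail $T$ through an undominated edge $xy$---is left as a plan rather than a proof, and you yourself flag this. The difficulty is real: one must rule out that the short connecting paths from $x$ and $y$ to $V(T)$ collide with each other or with $T$ in ways that prevent splicing into a \emph{trail} (no repeated edges). The diameter-$2$ hypothesis does guarantee length-$\le 2$ paths from $x$ and $y$ into $V(T)$ using only edges outside $E(T)$, but when the two entry points on $T$ coincide, or when the intermediate vertices coincide, the surgery needs care. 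Until those cases are written out, this remains a proof outline, not a proof.

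For context, Veldman's original argument does not proceed by direct trail surgery; the main theorem of~\cite{veld} gives a sufficient condition for $L(G)$ to be Hamiltonian in terms of forbidden induced subgraphs (specifically, controlling how certain small configurations sit inside $G$), and the diameter-$2$ corollary falls out because such graphs cannot contain the problematic induced configurations. If you want a self-contained proof along your lines, Catlin's reduction method for supereulerian and dominating-trail problems would let you finish the case analysis more cleanly than ad~hoc insertion.
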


\begin{theorem}
$L(\Gamma(\mathbb{Z}_{pq}))$ is pancyclic graph for $p < q$, where $p$ and $q$ are distinct primes. \\

\end{theorem}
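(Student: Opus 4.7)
The plan is to identify $\Gamma(\mathbb{Z}_{pq})$ explicitly as a complete bipartite graph and then invoke Proposition~3.1. First I would note that $Z^*(\mathbb{Z}_{pq})$ consists of the nonzero multiples of $p$ (the $q-1$ elements $p, 2p, \ldots, (q-1)p$) together with the nonzero multiples of $q$ (the $p-1$ elements $q, 2q, \ldots, (p-1)q$). A product $(ap)(bp) = abp^2$ vanishes modulo $pq$ only if $q \mid ab$, which fails since $1 \leq a,b \leq q-1$; the analogous fact holds for multiples of $q$. Every product of a multiple of $p$ with a multiple of $q$ is a multiple of $pq$, hence zero. Therefore $\Gamma(\mathbb{Z}_{pq}) \cong K_{p-1,q-1}$.

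Next I would verify the hypotheses of Proposition~3.1 applied to $G=K_{p-1,q-1}$, setting $n=|V(G)|=p+q-2$. The graph is connected as a nonempty complete bipartite graph. For the almost-bridgeless hypothesis: if $p\geq 3$ then both parts have size at least $2$ so every edge of $G$ lies in a $4$-cycle, making $G$ bridgeless; if $p=2$ then $G=K_{1,q-1}$ is a star, so every edge is a bridge but each bridge has a pendant endpoint, so $G$ is still almost bridgeless. For the degree condition, any edge $uv$ of $K_{p-1,q-1}$ joins the two parts, so $\{\deg(u),\deg(v)\}=\{p-1,q-1\}$ and $\deg(u)+\deg(v)=p+q-2=n$; the required inequality $n\geq \frac{2n+1}{3}$ reduces to $n\geq 1$. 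Finally, $G\ncong C_5$ because $G$ is bipartite, and $G\ncong C_4=K_{2,2}$ because $p\neq q$ forces the two parts of $G$ to have unequal sizes.

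With all hypotheses in place, Proposition~3.1 yields that $L(G)=L(\Gamma(\mathbb{Z}_{pq}))$ is pancyclic, finishing the proof. The step I expect to demand the most care is the almost-bridgeless check together with the small-order boundary case: when $p=2,q=3$ we have $n=3<4$ and $L(\Gamma(\mathbb{Z}_6))=K_2$, so Proposition~3.1 does not apply and one needs either to exclude this case from the statement or to rely implicitly on the size hypothesis $n\geq 4$ of the proposition. For every remaining pair $p<q$ the argument above goes through without modification.
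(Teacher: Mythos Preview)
Your proposal is correct and follows essentially the same route as the paper: identify $\Gamma(\mathbb{Z}_{pq})$ as $K_{p-1,q-1}$ and invoke Proposition~3.1 via the degree-sum identity $\deg(u)+\deg(v)=n$ together with $G\ncong C_4,C_5$. The only cosmetic difference is that the paper disposes of $p=2$ separately by noting $L(K_{1,q-1})=K_{q-1}$ is complete, whereas you fold that case into Proposition~3.1 through the almost-bridgeless clause; your explicit flagging of the degenerate pair $(p,q)=(2,3)$ is something the paper glosses over by silently writing ``$q>3$''.
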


\begin{proof}

 Suppose $p = 2$ and $q > 3$, then $\Gamma(\mathbb{Z}_{2q})$ is a star graph and its line graph $L(\Gamma(\mathbb{Z}_{2q}))$ is a complete graph of order $q-1$. So $L(\Gamma(\mathbb{Z}_{2q}))$ is pancyclic. Now, if $p, q$ are distinct odd primes, then $\Gamma(\mathbb{Z}_{pq})$ is a complete bipartite graph $K_{p-1, q-1}$ and $diam(\Gamma(\mathbb{Z}_{pq})) \leq 2$. Hence, by Corollary $3.2$, $L(\Gamma(\mathbb{Z}_{pq}))$ is a Hamiltonian graph. Now, for any $uv \in E(\Gamma(\mathbb{Z}_{pq}))$, $d(u) + d(v) = n > \frac{(2n+1)}{3}$ where $n = \lvert V(G) \rvert$ and $G \ncong C_4, C_5$. Thus, by Proposition $3.1$, $L(G)$ is pancyclic.

\end{proof}

\begin{theorem}

$L(\Gamma(\mathbb{Z}_{p^m}))$ is a pancyclic for a prime $p >3$ and $ m = 2, 3$.

\end{theorem}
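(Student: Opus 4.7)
The plan is to reduce both cases to a direct application of Proposition 3.1, so the real work is just hypothesis-checking for $G=\Gamma(\mathbb{Z}_{p^m})$.

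For $m=2$, the graph $\Gamma(\mathbb{Z}_{p^2})$ is the complete graph $K_{p-1}$, as already recorded in the proof of Lemma 2.6. Since $p>3$, it has $n=p-1\geq 4$ vertices, is bridgeless, and is not isomorphic to $C_4$ or $C_5$. Every edge $uv$ satisfies $deg(u)+deg(v)=2(p-2)$, and the inequality $2(p-2)\geq (2n+1)/3=(2p-1)/3$ rearranges to $4p\geq 11$ and therefore holds. Proposition 3.1 then gives that $L(\Gamma(\mathbb{Z}_{p^2}))$ is pancyclic.

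For $m=3$ I would first unpack the adjacency structure of $G=\Gamma(\mathbb{Z}_{p^3})$. Its vertex set $\langle p\rangle\setminus\{0\}$ has $p^2-1$ elements and splits by $p$-adic valuation into $A=\{kp:1\leq k\leq p^2-1,\ \gcd(k,p)=1\}$ of size $p^2-p$ and $B=\{kp^2:1\leq k\leq p-1\}$ of size $p-1$. Since the product of two such vertices vanishes modulo $p^3$ iff the sum of their $p$-valuations is at least $3$, the set $A$ is independent, $B$ forms a clique, and every vertex of $A$ is joined to every vertex of $B$; in particular the diameter of $G$ is $2$. Next I would record the degrees: any $a\in A$ has degree $|B|=p-1$, any $b\in B$ has degree $|A|+|B|-1=p^2-2$, and the minimum edge-degree-sum is attained on the $A$-to-$B$ edges, giving $(p-1)+(p^2-2)=p^2+p-3$. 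A short manipulation shows $p^2+p-3\geq(2(p^2-1)+1)/3$ for every $p>3$. For $p>3$ the set $B$ has at least four elements, so removing any edge of $G$ still leaves its endpoints with several common neighbours; hence $G$ is in fact bridgeless and clearly not isomorphic to $C_4$ or $C_5$, and Proposition 3.1 again concludes that $L(G)$ is pancyclic.

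The only real obstacle is correctly identifying the split structure of $\Gamma(\mathbb{Z}_{p^3})$; once the partition $A\sqcup B$ is in hand, the three hypotheses of Proposition 3.1 (almost bridgelessness, the Ore-type degree bound, and non-isomorphism to $C_4$ or $C_5$) each reduce to a short arithmetic check.
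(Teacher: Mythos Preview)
Your proposal is correct and follows essentially the same approach as the paper: both cases are reduced to an application of Proposition~3.1 by verifying the edge-degree-sum bound, (almost) bridgelessness, and $G\ncong C_4,C_5$, with the key numerical bounds $2p-4$ and $p^2+p-3$ matching the paper exactly. Your treatment of the $m=3$ case is simply more explicit than the paper's, which asserts bridgelessness and the minimum edge-degree-sum $p^2+p-3$ without spelling out the $A\sqcup B$ split you describe.
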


\begin{proof}
 For $m = 2$, $\Gamma(\mathbb{Z}_{p^2})$ is a complete graph of order $p-1$ and complete graph is always bridgeless such that $d(u) + d(v) = 2p-4 > \frac{(2n+1)}{3}$, where $n = \lvert \Gamma(\mathbb{Z}_{p^2}) \rvert$ and $uv \in E(\Gamma(\mathbb{Z}_{p^2}))$. Now, for $m = 3$, $\Gamma(\mathbb{Z}_{p^3})$ is also bridgeless graph such that $d(u) + d(v) \geq p^2+p-3 > \frac{(2n+1)}{3}$, where $n = \lvert \Gamma(\mathbb{Z}_{p^3}) \rvert = p^2-1$. Hence, for both cases, $d(u) + d(v) \geq \frac{(2n+1)}{3}$ for every $uv \in E(\Gamma(\mathbb{Z}_{p^m}))$. Therefore, from proposition $3.1$, $L(G)$ is Hamiltonian and $G \ncong C_4, C_5$. Thus, $L(G)$ is a pancyclic graph.

\end{proof}

\begin{definition}
	
	Let $G$ be a graph of order $p \geq 5$. Then $G$ is said to be an $R$-graph if there exist distinct $r, s, t, u \in V(G)$ such that $rs, st, tu, ur \in E(G)$ and for every $v \in V(G-r-s-t-u)$, either $rv \in E(G)$ or $tv \in E(G)$.
	
\end{definition}

\begin{lemma}
	
	\cite{pancyclic} Let $G$ be a graph of order $p \geq 5$. If $G$ is an $R$- graph, then $L(G)$ is pancyclic.
	
\end{lemma}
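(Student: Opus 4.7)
The plan is to exploit the $R$-graph structure, which forces the edges of $G$ to concentrate around the two distinguished vertices $r$ and $t$, and to use the resulting large cliques inside $L(G)$ as a backbone for cycles of every length.

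I would begin with a structural decomposition. Let $C_r \subseteq V(L(G))$ consist of the edges of $G$ incident with $r$, and $C_t$ the edges incident with $t$; both induce cliques in $L(G)$. The prescribed $4$-cycle on $r,s,t,u$ contributes $rs, ur \in C_r$ and $st, tu \in C_t$, and these four vertices of $L(G)$ are joined by the bridging adjacencies $rs \sim st$ (at vertex $s$) and $ur \sim tu$ (at vertex $u$). Write $E_0 = E(G) \setminus (C_r \cup C_t)$. The $R$-graph hypothesis says that every vertex of $V(G) \setminus \{r,s,t,u\}$ is adjacent in $G$ to $r$ or $t$; combined with $s, u \in N_G(r) \cap N_G(t)$, this yields that every endpoint of every $e \in E_0$ is dominated by $\{r,t\}$, so each such $e$ has at least one neighbour in $C_r \cup C_t$ inside $L(G)$.

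Next, I would build a Hamilton cycle $H$ of $L(G)$. First, traverse $C_r$ in a linear order starting at $rs$ and ending at $ur$; cross via $u$ to $tu \in C_t$; traverse $C_t$ ending at $st$; and close via $s$ back to $rs$. This produces a Hamilton cycle of $L(G)[C_r \cup C_t]$. Then, for each $e \in E_0$, pick an anchor $a \in C_r \cup C_t$ adjacent to $e$ in $L(G)$ and splice $e$ into the cycle at $a$; because $a$ lies inside a clique, the local segment around $a$ can be rerouted through $e$ while preserving every cycle adjacency.

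Pancyclicity would then follow from local modifications of $H$. The cliques $C_r$ and $C_t$ realise every cycle length from $3$ up to $\max(|C_r|, |C_t|)$ directly. For longer lengths, I would shorten $H$ by chord-replacement inside whichever clique currently supplies the most cycle vertices; each such swap reduces the cycle length by exactly one, so every intermediate length in $[3, |E(G)|]$ is attained. The main obstacle is the splicing step and its interaction with pancyclicity: an $E_0$ edge may have only one available anchor, forcing its insertion point, and one must verify that each insertion can be kept or skipped in order to hit every required length. A careful case analysis according to whether an $E_0$ edge has an endpoint in $\{s,u\}$ or in a general dominated vertex, together with a small-case check when $|C_r|$ or $|C_t|$ equals the minimum value $2$ coming from the $4$-cycle alone, resolves this difficulty.
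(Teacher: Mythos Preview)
The paper does not prove this lemma at all: it is quoted verbatim from Nebesk\'{y}~\cite{pancyclic} and used as a black box. So there is no ``paper's own proof'' to compare against; your proposal is an attempt to supply what the authors deliberately outsourced.

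As a sketch your outline is reasonable, but the splicing step contains a real gap. To insert an edge $e\in E_0$ into the Hamilton cycle of $L(G)[C_r\cup C_t]$ you need $e$ to be adjacent in $L(G)$ to \emph{two consecutive} vertices of that cycle, not merely to a single anchor $a$. The observation that the cycle-neighbours of $a$ are in a clique with $a$ lets you \emph{skip} $a$, not \emph{attach} $e$; if $e=xy$ with $x\in N_G(r)\setminus N_G(t)$ and $y\in N_G(t)\setminus N_G(r)$, then $e$ meets $C_r$ only at $rx$ and $C_t$ only at $ty$, and neither of those vertices need be adjacent on your cycle to anything else that touches $e$. Handling many such $e$'s simultaneously (possibly all anchored at the same $rx$) is exactly the nontrivial content of the lemma, and ``a careful case analysis \ldots\ resolves this difficulty'' does not discharge it. A cleaner route to Hamiltonicity of $L(G)$ is to note that the $4$-cycle $r\,s\,t\,u\,r$ is a dominating closed trail of $G$ (every edge of $G$ has an endpoint on it, by the $R$-graph hypothesis together with $s,u\in N_G(r)\cap N_G(t)$), and then invoke the Harary--Nash-Williams criterion; the pancyclicity upgrade still requires the kind of length-by-length argument Nebesk\'{y} carries out, and your chord-shortening idea would need to track carefully which chords survive once the cycle has already been shortened below $|C_r|+|C_t|$.
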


\begin{theorem}
	
	 For integer $m \geq 2$, $L(\Gamma(\mathbb{Z}_{2^m}[i]))$ is pancyclic.
	
\end{theorem}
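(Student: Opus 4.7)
The plan is to invoke Lemma~3.6 by exhibiting an explicit $R$-graph structure inside $G = \Gamma(\mathbb{Z}_{2^m}[i])$. Using $\mathbb{Z}_{2^m}[i] \cong \mathbb{Z}[i]/\langle (1+i)^{2m}\rangle$, which is a local principal ideal ring with maximal ideal $\langle 1+i\rangle$, every zero divisor has the form $u(1+i)^k$ for some unit $u$ and some $1 \leq k \leq 2m-1$; two nonzero zero divisors $x,y$ are adjacent precisely when $\nu(x)+\nu(y) \geq 2m$, where $\nu$ denotes the $(1+i)$-adic valuation. In particular, $c := (1+i)^{2m-1}$ is adjacent to every other vertex, so once we place $c$ into our quadruple the domination condition in Definition~3.5 becomes automatic.

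For the main case $m \geq 3$, I would take
\[
r = (1+i)^{2m-1}, \quad s = (1+i)^{2m-2}, \quad t = (1+i)^{2}, \quad u = i(1+i)^{2m-2}.
\]
These four are distinct: $r$ and $t$ have valuations $2m-1$ and $2$ respectively, while $s$ and $u$ both have valuation $2m-2$ and $s-u = (1-i)(1+i)^{2m-2} = -i(1+i)^{2m-1} \neq 0$. The four required edges hold because $\nu(r)+\nu(s) = \nu(u)+\nu(r) = 4m-3 \geq 2m$ and $\nu(s)+\nu(t) = \nu(t)+\nu(u) = 2m$, so each product vanishes. Since $r = c$ dominates every other vertex, Definition~3.5 is satisfied, and Lemma~3.6 immediately yields pancyclicity of $L(G)$.

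The main obstacle is the boundary case $m = 2$, where the quadruple above degenerates (one checks $s = t = 2i$), and in fact $\Gamma(\mathbb{Z}_4[i])$ contains no $4$-cycle at all, hence cannot be an $R$-graph. I would treat this case by direct inspection. The graph $\Gamma(\mathbb{Z}_4[i])$ has $7$ vertices: the universal vertex $2+2i$ joined to the other six, four pendants $\{1+i,\,1+3i,\,3+i,\,3+3i\}$, and two further vertices $2$ and $2i$ that are also joined to each other. Its line graph is therefore a copy of $K_6$ (arising from the six edges at $2+2i$) with one additional vertex of degree two attached to the two $K_6$-vertices corresponding to $\{2,\,2+2i\}$ and $\{2i,\,2+2i\}$. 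Cycles of lengths $3,4,5,6$ all live inside the $K_6$, and a Hamiltonian $7$-cycle is obtained by traversing the extra vertex via its two edges and closing up with any Hamilton path in $K_6$ between its two distinguished neighbours. This completes the verification.
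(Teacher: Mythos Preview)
Your proof is correct and follows essentially the same route as the paper: the case $m=2$ is handled by direct inspection of the seven-vertex line graph (a $K_6$ with one extra degree-two vertex), and for $m\geq 3$ Lemma~3.6 is invoked via an $R$-graph quadruple built around the universal vertex. Your $(1+i)$-adic presentation is just a cleaner repackaging of the paper's explicit choice $r=2^{m-1}+i2^{m-1}$, $s=2^{m-1}$, $t=2$, $u=i2^{m-1}$, whose valuations $2m-1,\,2m-2,\,2,\,2m-2$ coincide with yours.
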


\begin{proof}
	
	For $m = 2$, $L(\Gamma(\mathbb{Z}_{2^2}[i]))$ is a graph of order $7$. Then it has induced complete subgraph of order $6$ and a vertex of degree $2$. Therefore, the graph contain all cycles of length $k$, for $3 \leq k \leq 7$. Hence, $L(\Gamma(\mathbb{Z}_{2^2}[i]))$ is pancyclic. Now, for $m >2$, consider distinct four vertices $r, s, t, u $ of $V(\Gamma(\mathbb{Z}_{2^m}[i]))$ where $r = 2^{n-1} +i2^{n-1}$, $s = 2^{n-1}$, $t =2$, $u = i2^{n-1}$ such that $rs, st, tu, ur$ have an edge. Also, for every $v \in V(\Gamma(\mathbb{Z}_{2^m}[i]))\backslash \{r, s, t, u\}$ is adjacent to $r$. Hence, the graph $\Gamma(\mathbb{Z}_{2^m}[i])$ is an $R$- graph of order greater than $5$. Thus, by Lemma $3.6$, $L(\Gamma(\mathbb{Z}_{2^m}[i]))$ is pancyclic.
	
\end{proof}

\begin{theorem}
	
	$L(\Gamma(\mathbb{Z}_{q^m}[i]))$ is pancyclic, for $m \geq 2$.	
	
\end{theorem}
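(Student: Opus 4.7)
The plan is to mirror the strategy of Theorem 3.7, exhibiting $\Gamma(\mathbb{Z}_{q^m}[i])$ as an $R$-graph in the sense of Definition 3.5 and invoking Lemma 3.6. I would split on $m$. For $m=2$, Theorem 2.2 gives $\Gamma(\mathbb{Z}_{q^2}[i]) \cong K_{q^2-1}$, which has at least $8$ vertices (since $q\geq 3$) and is trivially an $R$-graph: pick any four vertices $r,s,t,u$, and every other vertex is automatically adjacent to $r$.

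For $m\geq 3$ I would exploit that $\mathbb{Z}_{q^m}[i]$ is a local ring with maximal ideal $\langle q\rangle$ (since $q \equiv 3 \pmod 4$ remains prime in $\mathbb{Z}[i]$), so $V(\Gamma(\mathbb{Z}_{q^m}[i])) = \langle q\rangle \setminus \{0\}$ and $x\cdot y = 0$ exactly when the sum of the $q$-adic valuations of $x$ and $y$ is at least $m$. In particular, any element of the form $q^{m-1}z$ with $z$ a unit annihilates every other zero divisor, hence is a \emph{universal} vertex in $\Gamma(\mathbb{Z}_{q^m}[i])$. I would then take $r = q^{m-1}(1+i)$, $s = q^{m-1}$, $t = q^{m-1}i$, $u = q^{m-1}(1-i)$. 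The four products $rs, st, tu, ur$ each have the form $q^{2(m-1)}\cdot(\text{unit})$ and vanish because $2(m-1)\geq m$, while every other vertex $v \in \langle q\rangle$ satisfies $rv=0$ by universality, so the second clause of the $R$-graph definition comes for free.

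The only genuine calculation is the distinctness of $r, s, t, u$. Each pairwise difference is of the form $q^{m-1}\alpha$ with $\alpha \in \{1,\ i,\ 1-i,\ 2i,\ 1+i,\ 2i-1\}$; since each such $\alpha$ has Gaussian norm at most $5 < q^2$, $q$ does not divide $\alpha$ in $\mathbb{Z}[i]$ and hence $q^{m-1}\alpha \neq 0$ in $\mathbb{Z}_{q^m}[i]$. Because $\lvert V(\Gamma(\mathbb{Z}_{q^m}[i]))\rvert = q^{2(m-1)} - 1 \geq 5$ in all cases under consideration, Lemma 3.6 applies and yields pancyclicity of the line graph. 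The argument is a direct analogue of Theorem 3.7; the only substantive new ingredient is the use of $q \equiv 3 \pmod 4$ to ensure that $q$ behaves as a Gaussian prime, and there is no serious obstacle beyond the distinctness bookkeeping.
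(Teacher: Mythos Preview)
Your proposal is correct and follows the paper's approach: exhibit $\Gamma(\mathbb{Z}_{q^m}[i])$ as an $R$-graph and apply Lemma~3.6. The paper handles $m=2$ via Proposition~3.1 and for $m>2$ selects $r=q^{m-1}$, $s=q^{\lceil m/2\rceil}$, $t=iq^{\lceil m/2\rceil}$, $u=iq^{m-1}$, whereas you draw all four vertices from the universal layer $q^{m-1}\cdot(\text{units})$ --- a cosmetic variation that is, if anything, tidier (one small slip: your list of difference factors should read $\alpha\in\{1,\,i,\,1-i,\,2i,\,2i-1\}$, with $i$ occurring twice rather than $1+i$, but the norm bound $\leq 5<q^2$ still holds).
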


\begin{proof}
	
	Since for the prime $q$, $\mathbb{Z}_{q}[i]$ is a field, so its zero divisor graph is an empty graph. For $m=2$, $\Gamma(\mathbb{Z}_{q^2}[i])$ is a complete graph, therefore by Proposotion $3.1$, $L(\Gamma(\mathbb{Z}_{q^2}[i]))$ is pancyclic. Now, for $m>2$, we consider four distinct vertices $r, s, t, u $ of $V(\Gamma(\mathbb{Z}_{q^m}[i]))$ such that $r = q^{m-1}$, $s = q^{\ceil*{\frac{m}{2} }}$, $t = iq^{\ceil*{\frac{m}{2} }}$ and $u = iq^{m-1}$. It is clear that $rs, st, tu, ur \in E(\Gamma(\mathbb{Z}_{q^m}[i]))$ and for every  $v \in V(\Gamma(\mathbb{Z}_{q^m}[i]))\backslash \{r, s, t, u\}$ is adjacent to $r$. Hence, the graph $\Gamma(\mathbb{Z}_{q^m}[i])$ is an $R$-graph of order greater than $5$. Thus, by Lemma $3.6$, $L(\Gamma(\mathbb{Z}_{q^m}[i]))$ is pancyclic.
	
\end{proof}

\begin{theorem}
	
	$L(\Gamma(\mathbb{Z}_{p^m}[i]))$ is pancyclic if and only if $m = 1$.
	
\end{theorem}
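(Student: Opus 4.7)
For the easy direction ($m=1$), I would apply Proposition~3.1. Since $p\equiv 1\pmod 4$ forces $p\ge 5$, the graph $\Gamma(\mathbb{Z}_p[i])$ is the balanced complete bipartite graph $K_{p-1,p-1}$ on $n=2(p-1)\ge 8$ vertices. It is $2$-edge-connected (hence bridgeless), every edge $uv$ satisfies $\deg(u)+\deg(v)=2(p-1)=n\ge (2n+1)/3$, and it is not isomorphic to $C_4$ or $C_5$. Proposition~3.1 immediately yields that $L(\Gamma(\mathbb{Z}_p[i]))$ is pancyclic.

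For the converse direction, the plan is to show that for $m\ge 2$, $L(\Gamma(\mathbb{Z}_{p^m}[i]))$ is not even Hamiltonian, hence not pancyclic, by applying Proposition~2.4. Using the splitting $p=\pi\overline{\pi}$ in $\mathbb{Z}[i]$, one has $\mathbb{Z}_{p^m}[i]\cong R\times R$, with $R=\mathbb{Z}[i]/(\pi^m)$ a chain ring of order $p^m$. I would stratify vertices of $\Gamma$ into types $(a,b)\in\{0,\ldots,m\}^2\setminus\{(0,0),(m,m)\}$ given by coordinate $\pi$-valuations, with adjacency $(a_1,b_1)\sim(a_2,b_2)$ iff $a_1+a_2\ge m$ and $b_1+b_2\ge m$. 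The decisive observation is that the numerous vertices of type $(0,1)$ have all their $p-1$ neighbors inside the tiny shell of type $(m,m-1)$, and symmetrically for types $(1,0)$ and $(m-1,m)$. Taking $S\subset V(L(\Gamma))=E(\Gamma)$ to be the set of all edges of $\Gamma$ incident to the union of the $(m,m-1)$- and $(m-1,m)$-shells, removal of $S$ strips every type-$(0,1)$ and type-$(1,0)$ vertex of $\Gamma$ of all incident edges, so the remainder of $L(\Gamma)$ splits into many small components indexed by the surviving edge-neighborhoods.

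The main obstacle is the final count: precisely comparing $|S|$ with $c(L(\Gamma)-S)$ and verifying the strict inequality $c(L(\Gamma)-S)>|S|$ needed to contradict Proposition~2.4. If this direct cut fails to deliver the required inequality, a fallback is to invoke the Harary-Nash-Williams criterion that $L(G)$ is Hamiltonian if and only if $G$ admits a dominating closed trail, and to argue that the segregated shell structure of $\Gamma(\mathbb{Z}_{p^m}[i])$ prevents any closed trail from simultaneously dominating all the edges incident to the type-$(0,1)$ and type-$(1,0)$ vertex sets.
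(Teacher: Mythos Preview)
Your treatment of the $m=1$ direction is correct and is exactly what the paper does: apply Proposition~3.1 to $K_{p-1,p-1}$.

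For the converse, however, there is a genuine gap. The paper does not argue directly at all; it simply invokes Corollary~4.4 of \cite{naz}, which already states that $L(\Gamma(\mathbb{Z}_{p^m}[i]))$ is not Hamiltonian for $m\ge 2$. Your attempt to reprove this via Proposition~2.4 runs into a conceptual problem with the chosen cut set. If $S\subset E(\Gamma)$ consists of all edges incident to the $(m,m-1)$- and $(m-1,m)$-shells, then removing $S$ does indeed isolate every type-$(0,1)$ and type-$(1,0)$ vertex in $\Gamma$, but isolated vertices of $\Gamma$ contribute \emph{nothing} to $L(\Gamma)$: the components of $L(\Gamma)-S$ correspond to the components of the spanning subgraph $\Gamma\setminus S$ that still contain at least one edge. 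So the ``many small components'' you describe do not actually arise from the stripped $(0,1)$ and $(1,0)$ vertices. Meanwhile, $|S|$ is enormous: each of the $p-1$ vertices of type $(m,m-1)$ has degree $p^{2m-1}-2$, so $|S|$ is of order $p^{2m-1}$. The remaining edge set of $\Gamma$ (after deleting those two shells) is still highly connected through the $(m-1,m-1)$-shell, so $c(L(\Gamma)-S)$ will be small; the inequality $c(L(\Gamma)-S)>|S|$ fails badly, and Proposition~2.4 gives no contradiction.

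Your fallback via the Harary--Nash-Williams dominating closed trail criterion is a more promising line, and is in fact closer in spirit to how results like \cite{naz} are typically proved, but as stated it is only a plan: you have not identified why no closed trail can dominate all edges. Note in particular that a closed trail passing through all $p-1$ vertices of type $(m,m-1)$ already dominates every edge incident to the type-$(0,1)$ vertices, so the obstruction is subtler than the one you sketch. Absent a completed argument, the safest route is the one the paper takes: cite \cite{naz} for the non-Hamiltonicity of $L(\Gamma(\mathbb{Z}_{p^m}[i]))$ when $m\ge 2$.
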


\begin{proof}
	
If $m = 1$, then $\Gamma(\mathbb{Z}_{p}[i])$ is complete bipartite graph $K_{p-1, p-1}$ and this implies it is a Hamiltonian. Now, taking $uv \in E(\Gamma(\mathbb{Z}_{p}[i]))$, $d(u) + d(v) = p-1 + p-1 = 2p-2 > \frac{(2n+1)}{3} $, where $n = 2p-2$, then $L(\Gamma(\mathbb{Z}_{p}[i]))$ is Hamiltonian and also $G \ncong C_4, C_5$, therefore by Proposition $3.1$, $L(\Gamma(\mathbb{Z}_{p}[i]))$ is pancyclic. If $m \neq 1$, then from $(Corollary ~4.4 )$ of \cite{naz}, $L(\Gamma(\mathbb{Z}_{p^m}[i]))$ is not Hamiltonian and so 	 $L(\Gamma(\mathbb{Z}_{p^m}[i]))$ is not pancyclic.
	
\end{proof}

\section{When is $\overline{\Gamma(\mathbb Z_n[i])}$ pancyclic$?$}

For $n = 2^m$, $\overline{\Gamma(\mathbb Z_{2^m}[i])}$ is not Hamiltonian because it contains isolated vertex $\langle (1+ i)^{2m-1}\rangle \backslash \{0\}$.

An important result given by G. H Fan in \cite{fan} will help to prove the Theorem $4.2$.

\begin{theorem}
	
\cite{fan}	Let $G$ be a $2$-connected graph on $n > 3$ vertices and $v$,  $u$ be distinct vertices of $G$. If
	\begin{center}
			
	$d(v, u) = 2 \Rightarrow max(d(v), d(u)) \geq n/2,$
\end{center}

then $G$ has a Hamiltonian cycle.

\end{theorem}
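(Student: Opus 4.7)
The plan is to argue by contradiction along the classical longest-cycle route used throughout Hamiltonian graph theory. Assume $G$ is $2$-connected on $n > 3$ vertices, satisfies Fan's distance-$2$ degree condition, yet has no Hamiltonian cycle. Take $C = v_0 v_1 \cdots v_{k-1} v_0$ to be a longest cycle in $G$; by $2$-connectivity $k \geq 3$, and since $G$ is not Hamiltonian there exists $x \notin V(C)$. Applying $2$-connectivity between $x$ and $V(C)$ (e.g.\ via a fan-lemma or Menger-style argument), one extracts two internally disjoint paths $P_1, P_2$ from $x$ to $C$ meeting $C$ only at their distinct endpoints $v_a$ and $v_b$.

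The second step is to mine \emph{forbidden edges} on $C$ from the maximality of $C$. The standard rotation/chord-swap trick gives: whenever $v_a v_i$ and $v_b v_j$ are edges, the edge $v_{i+1} v_{j+1}$ is forbidden (indices cyclic), because otherwise one could detour through $P_1, x, P_2$ and splice in the chord $v_{i+1} v_{j+1}$ to obtain a cycle strictly longer than $C$. This furnishes a large inventory of non-edges on $C$ and bounds the joint neighborhoods of the attachment vertices $v_a, v_b$ on $C$ from above.

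Now invoke the Fan hypothesis. In the generic configuration the pair $(v_{a+1}, v_{b+1})$ lies at distance exactly $2$ in $G$, witnessed either by common neighbors along $C$ or by $v_a, v_b$ themselves; Fan's condition then forces $\max\bigl(d(v_{a+1}), d(v_{b+1})\bigr) \geq n/2$. The vertex attaining this bound must therefore have a large majority of its neighbors lying on $C$, and matching this against the forbidden-edge catalogue produces an unused chord whose splicing with $P_1 \cup \{x\} \cup P_2$ yields a cycle longer than $C$, contradicting maximality. Hence $k = n$ and $G$ is Hamiltonian.

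The principal obstacle is the case analysis in the degenerate configurations: when $a$ and $b$ are too close on $C$ and the forbidden-edge scheme collapses; when $v_a v_b$ is itself already a chord; when the chosen pair $(v_{a+1}, v_{b+1})$ turns out to be adjacent (distance $1$) or lies at distance $\geq 3$, so that Fan's hypothesis cannot be applied to it directly. In each such case one must re-select the distance-$2$ pair adaptively, possibly using $x$ itself or interior vertices of $P_1 \cup P_2$, and re-run the degree-count contradiction. The cyclic index bookkeeping together with the requirement that the chosen pair realize distance \emph{exactly} two is where the technical weight of Fan's original argument is concentrated.
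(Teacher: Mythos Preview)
The paper does not give any proof of this theorem: it is quoted verbatim from Fan's 1984 paper (reference \cite{fan}) and used as a black box to establish Theorem~4.2. There is therefore nothing in the paper to compare your attempt against.

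As for the attempt itself, what you have written is a high-level outline of the classical longest-cycle strategy, and it is in the right spirit: Fan's own argument does take a longest cycle $C$, an off-cycle vertex $x$, and exploits forbidden chords coming from maximality of $C$. But what you have is a plan, not a proof. The crucial content of Fan's theorem lies precisely in the ``principal obstacle'' paragraph you flagged: showing that one can \emph{always} locate a pair of vertices at distance exactly~$2$ to which the hypothesis applies and whose large degree genuinely contradicts the forbidden-edge count. Your sketch asserts that ``in the generic configuration'' $(v_{a+1},v_{b+1})$ is such a pair, but you give no argument for this, and the non-generic cases are where all the work is. Fan's original proof runs to several pages of careful cyclic-index bookkeeping and iterated re-selection of the distance-$2$ pair; none of that is present here. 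So the approach is correct in outline, but the proposal as written has a genuine gap exactly where the mathematical difficulty of the theorem lives.
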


\begin{theorem}

For $m > 1$, $\overline{\Gamma(\mathbb Z_{p^m}[i])}$ is pancyclic graph.

\end{theorem}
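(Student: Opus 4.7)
The plan is to exploit the Chinese Remainder decomposition $\mathbb{Z}_{p^m}[i] \cong \mathbb{Z}_{p^m} \times \mathbb{Z}_{p^m}$, which holds because $p \equiv 1 \pmod 4$ splits in $\mathbb{Z}[i]$ as $p = (a+ib)(a-ib)$ into two non-associate Gaussian primes, and then to combine Fan's condition (Theorem 4.1) with a density argument to upgrade the resulting Hamilton cycle to pancyclicity. Under this identification, the vertices of $\overline{\Gamma(\mathbb{Z}_{p^m}[i])}$ are the nonzero pairs $(x,y) \in \mathbb{Z}_{p^m} \times \mathbb{Z}_{p^m}$ with $p \mid x$ or $p \mid y$; an edge of $\overline{\Gamma}$ joins $(x_1,y_1)$ and $(x_2,y_2)$ whenever $x_1 x_2 \not\equiv 0$ or $y_1 y_2 \not\equiv 0 \pmod{p^m}$; and the order is $n = 2p^{2m-1} - p^{2m-2} - 1$, as already recorded in Theorem 2.3.

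First I would verify $2$-connectivity. The set $A = \{(u,0) : u \in \mathbb{Z}_{p^m}^{\ast}\}$ is a clique of size $p^{m-1}(p-1)$ in $\overline{\Gamma}$ since $(u_1,0)(u_2,0) = (u_1 u_2, 0) \neq 0$, and symmetrically $B = \{(0,v) : v \in \mathbb{Z}_{p^m}^{\ast}\}$ is a clique; moreover every element of $A$ is adjacent in $\overline{\Gamma}$ to every vertex $(x,y)$ with $x \neq 0$. This rich structure gives $2$-connectedness after deleting any single vertex. Next I would check Fan's hypothesis: for every pair $u,v$ with $d_{\overline{\Gamma}}(u,v) = 2$, verify $\max\{d(u), d(v)\} \geq n/2$. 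The $\overline{\Gamma}$-degree of $(x,y)$ equals $n-1 - d_{\Gamma}(x,y)$, and only the very few extremal vertices, namely nonzero multiples of $(p^{m-1},0)$ or $(0,p^{m-1})$, have small complement degree. The key technical step is to argue that a distance-$2$ pair in $\overline{\Gamma}$ cannot consist of two such extremal vertices simultaneously, forcing the $\max$ to exceed $n/2$.

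Once Fan's theorem supplies a Hamilton cycle, I would invoke Bondy's classical theorem to pass from Hamiltonian to pancyclic: a Hamiltonian graph on $n$ vertices with $|E(G)| \geq n^2/4$ is either pancyclic or isomorphic to $K_{n/2,n/2}$. The edge count for $\overline{\Gamma}$, equivalently the non-edge count $|E(\Gamma(\mathbb{Z}_{p^m}[i]))|$ which grows as a lower-order polynomial in $p^m$ than $\binom{n}{2}$, gives the density bound with room to spare for all $m \geq 2$; and $\overline{\Gamma}$ is not $K_{n/2,n/2}$ because it contains the clique $A$ of size strictly greater than $2$.

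The step I expect to be the main obstacle is the verification of Fan's condition, because the distance-$2$ pairs in $\overline{\Gamma}$ correspond to pairs of zero divisors that share a common annihilator in the ring, and one needs a careful case analysis on the $p$-adic valuations of the coordinates of each vertex to describe them. If this route turns out to be awkward, a reasonable alternative is to bypass Fan entirely and construct pancyclic cycles directly, starting from a cycle that alternates short excursions into the cliques $A$ and $B$ and then shortcutting it one vertex at a time to realize each intermediate length between $3$ and $n$.
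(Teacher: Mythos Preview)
Your plan mirrors the paper's proof almost exactly: the same Chinese Remainder identification $\mathbb{Z}_{p^m}[i]\cong\mathbb{Z}_{p^m}\times\mathbb{Z}_{p^m}$, the same appeal to Fan's theorem for a Hamilton cycle, the same upgrade via Bondy's $|E|\ge n^2/4$ criterion, and the same way of excluding the $K_{n/2,n/2}$ alternative by exhibiting a triangle (the paper uses $(0,1)$--$(0,2)$--$(0,3)$, you use the clique $A$). So as far as strategy goes, you have recovered the intended argument.

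There is, however, a genuine gap in the Fan verification, and it is precisely at the point you flagged as the main obstacle. Your assertion that ``a distance-$2$ pair in $\overline{\Gamma}$ cannot consist of two such extremal vertices simultaneously'' is false. Take two extremal vertices on the \emph{same} side, say $u=(0,s_1p^{m-1})$ and $v=(0,s_2p^{m-1})$ with $s_1\neq s_2$ and $\gcd(s_j,p)=1$. Their product is $(0,s_1s_2p^{2m-2})=(0,0)$, so they are non-adjacent in $\overline{\Gamma}$; but $(0,1)$ is a common $\overline{\Gamma}$-neighbour of both, since $(0,1)\cdot(0,s_jp^{m-1})=(0,s_jp^{m-1})\neq 0$. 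Hence $d_{\overline{\Gamma}}(u,v)=2$. Meanwhile the $\overline{\Gamma}$-degree of each of $u,v$ is $p^{2m-1}-p^{2m-2}$, which is strictly less than $n/2=p^{2m-1}-\tfrac12 p^{2m-2}-\tfrac12$ for every $m\ge 2$. So Fan's hypothesis fails for this pair. The paper only checks the cross-side case $(0,sp^{m-1})$ versus $(tp^{m-1},0)$, showing those are at distance $3$, and overlooks the same-side pairs; thus the paper's proof shares this gap. Your fallback plan of building cycles directly from the large cliques $A$ and $B$ is therefore not optional but necessary, and you should expect to rely on it (or on a closure/Chv\'atal--Erd\H{o}s style argument) rather than on Fan's theorem.
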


\begin{proof}

For $m = 1$, $\overline{\Gamma(\mathbb Z_{p}[i])} \cong K_{p-1} \cup K_{p-1}$, which is disconnected. So $\overline{\Gamma(\mathbb Z_{p}[i])}$ is not a pancyclic. Now, for $m > 1$, $\overline{\Gamma(\mathbb Z_{p^m}[i])} \cong \overline{\Gamma(\mathbb Z_{p^m} \times \mathbb Z_{p^m})}$. Here degree of each vertex is greater than $n/2 $ where $n = \lvert \Gamma(\mathbb Z_{p^m}[i]) \rvert$ except $(0, sp^{n-1})$ and $(sp^{n-1}, 0)$, where $1 \leq s \leq p-1$. Since $d\{ (0, sp^{n-1}), (sp^{n-1}, 0)\} = 3$, because $(0, sp^{n-1})--(0, 1)--(1,p)--(sp^{n-1}, 0)$ or $(0, sp^{n-1})--(p, 1)--(1,0)--(sp^{n-1}, 0)$. Then for every $u, v \in \overline{\Gamma(\mathbb Z_{p^m}[i])}$, $d(u, v) = 2$, $max(d(u), d(v)) \geq n/2$ hold. Also, this is a $2-$connected, therefore by theorem $4.1$, $\overline{\Gamma(\mathbb Z_{p^m}[i])}$ is Hamiltonian and edges of $\overline{\Gamma(\mathbb Z_{p^m}[i])}$ $i.e$ $ \lvert E(G) \rvert \geq n^2/4$. Hence, by Bondy \cite{bondy}, $\overline{\Gamma(\mathbb Z_{p^m}[i])}$ is either pancyclic of complete bipartite graph. Since $(0,1)--(0,2)--(0,3)$ in $\overline{\Gamma(\mathbb Z_{p^m}[i])}$, so it contains a cycle of length $3$. Thus, $\overline{\Gamma(\mathbb Z_{p^m}[i])}$ is pancyclic graph.

\end{proof}

For $\overline{\Gamma(\mathbb Z_{q^m}[i])}$, here every vertex $\langle q\rangle$ is adjacent to $\langle q^{n-1}\rangle$ in $\Gamma(\mathbb Z_{q^m}[i])$. so $\langle q^{n-1}\rangle$ is an isolated vertex in $\overline{\Gamma(\mathbb Z_{q^m}[i])}$. Hence $\overline{\Gamma(\mathbb Z_{q^m}[i])}$ will never be pancyclic.

\section{When is $L(\overline{\Gamma(\mathbb Z_n[i])})$ pancyclic$?$}

Since $V(\Gamma(\mathbb{Z}_2 [i])) =\{1+i\}$, so its complement graph is $K_0$. $\Gamma (\mathbb{Z}_{q^2} [i])$ is a complete graph and $L(\overline{\Gamma(\mathbb Z_{q^2}[i])})$ is $K_0$. Now $\overline{\Gamma(\mathbb Z_{2^m}[i])},~ m\geq 2$ is a disconnected graph with two component. One is isolated vertex $\{2^{m-1} + i2^{m-1}\}$ and other is connected subgraph called $H$. So, $L(\overline{\Gamma(\mathbb Z_{2^m}[i])}) \cong L(H)$. Similarly, $\overline{\Gamma(\mathbb Z_{q^m}[i])}$ is also disconnected graph in which connected component is called $H$. Then $L(\overline{\Gamma(\mathbb Z_{q^m}[i])}) \cong L(H)$. Now, we prove following theorem.

\begin{theorem}
If $n = 2^m, ~ m\geq 2$, then $L(\overline{\Gamma(\mathbb Z_n[i])})$ is pancyclic.	
\end{theorem}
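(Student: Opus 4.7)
The plan is to mimic the $R$-graph strategy used in Theorem 3.7 for $L(\Gamma(\mathbb{Z}_{2^m}[i]))$, but now applied inside the complement graph. As the section preamble already observes, $\overline{\Gamma(\mathbb{Z}_{2^m}[i])}$ consists of the isolated vertex $2^{m-1}+i2^{m-1}$ together with a connected component $H$, so $L(\overline{\Gamma(\mathbb Z_n[i])})\cong L(H)$. It therefore suffices to show that $H$ is an $R$-graph of order at least $5$ and invoke Lemma $3.6$.

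To set this up, I would use the $(1+i)$-adic valuation $\nu$ on $\mathbb{Z}_{2^m}[i]$, recalling that $(1+i)^{2m}=0$ and that two nonzero zero-divisors $x,y$ are adjacent in $\overline{\Gamma}$ exactly when $xy\neq 0$, i.e.\ when $\nu(x)+\nu(y)<2m$. The unique vertex of valuation $2m-1$ is the removed isolated vertex $2^{m-1}+i2^{m-1}$, so every vertex of $H$ has $\nu\leq 2m-2$. I would then choose the four distinct vertices
\[
r=1+i,\qquad s=2,\qquad t=1-i,\qquad u=2i,
\]
with valuations $1,2,1,2$ respectively. Each of the four edges $rs,st,tu,ur$ has valuation-sum $3<2m$ for every $m\geq 2$, so the $4$-cycle $r$-$s$-$t$-$u$-$r$ lives in $H$.

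The decisive observation is that $r=1+i$, having minimum valuation, is a universal vertex of $H$: for any $v\in V(H)$ we have $\nu(r)+\nu(v)\leq 1+(2m-2)=2m-1<2m$, hence $rv\in E(H)$. This immediately yields the $R$-graph condition that every $v\in V(H)\setminus\{r,s,t,u\}$ is adjacent to $r$. Combined with the order bound $|V(H)|=2^{2m-1}-2\geq 6$ for $m\geq 2$ (using $|\Gamma(\mathbb{Z}_{2^m}[i])|=2^{2m-1}-1$ from Theorem $2.1$), Lemma $3.6$ gives that $L(H)$ is pancyclic, hence so is $L(\overline{\Gamma(\mathbb Z_n[i])})$.

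The only real obstacle is justifying the universal-vertex claim, which reduces to the cleanly stated fact about the $(1+i)$-adic filtration of $\mathbb{Z}_{2^m}[i]$ and the already-cited description of the isolated vertex. Everything else—distinctness of the four chosen vertices and the valuation-sum inequalities for the $4$-cycle—is routine bookkeeping that holds uniformly for all $m\geq 2$, so no separate base case is needed.
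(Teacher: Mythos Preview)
Your argument is correct, but it follows a different route from the paper's own proof. The paper does \emph{not} use the $R$-graph criterion here; instead it applies Proposition~3.1 (the Benhocine--Clark--K\"{o}hler--Veldman degree-sum condition). Specifically, the paper observes that the $2^{2m-2}$ pendant vertices of $\Gamma(\mathbb{Z}_{2^m}[i])$ induce a complete subgraph in the complement, and that every other vertex of $H$ is adjacent to each of these; this gives $\delta(H)\ge 2^{2m-2}$, hence $\deg(u)+\deg(v)\ge 2^{2m-1}>\tfrac{2n+1}{3}$ for every edge $uv$, and Proposition~3.1 finishes.

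Your $R$-graph approach is arguably cleaner: the $(1+i)$-adic valuation makes the adjacency test in $\overline{\Gamma}$ completely transparent, and exhibiting the single universal vertex $r=1+i$ avoids any degree counting. It also has the pleasant feature of unifying the treatment with Theorem~5.2, where the paper itself switches to the $R$-graph method for the $q^m$ case. The paper's approach, on the other hand, extracts a stronger structural fact about $H$ (a large minimum degree), which could be reused elsewhere, at the cost of implicitly needing the ``almost bridgeless'' hypothesis of Proposition~3.1---something that does follow from $\delta(H)\ge 4$ but is not spelled out. Both proofs handle $m=2$ without a separate base case.
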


\begin{proof}
	
	Since$\Gamma(\mathbb Z_{2^m}[i]),$ has $2^{2m-2}$ vertices having degree $1$. In $\overline{\Gamma(\mathbb Z_{2^m}[i])}$, the connected component $H$ has complete subgraph of order $2^{2m-2}$ and other vertices of $H$ are adjacent to atleast $2^{2m-2}$ vertices. So, $\delta (H) \geq 2^{2m-2}$. Now taking for every $u, v \in H$, $deg(u) + deg(v) \geq 2^{2m-2} + 2^{2m-2} = 2^{2m-1} > \frac{2m+1}{3}$ and $H \neq C_4, C_5$. Then by proposition $3.1$, $L(H)$ is pancyclic and $L(\overline{\Gamma(\mathbb Z_{2^m}[i])}) \cong L(H)$. Hence, $L(\overline{\Gamma(\mathbb Z_{2^m}[i])})$ is pancyclic.
	
	\end{proof}

\begin{theorem}
	If $n = q^m, ~ m\geq 3$, then $L(\overline{\Gamma(\mathbb Z_n[i])})$ is pancyclic.
\end{theorem}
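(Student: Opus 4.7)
The plan is to mirror the argument of Theorem 5.1: apply Proposition 3.1 to the non-trivial connected component $H$ of $\overline{\Gamma(\mathbb Z_{q^m}[i])}$, using the observation recorded just above the statement that $L(\overline{\Gamma(\mathbb Z_{q^m}[i])})\cong L(H)$.

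First I would set up the structure of $\overline{\Gamma(\mathbb Z_{q^m}[i])}$ via the $q$-adic valuation. Since $q\equiv 3\pmod 4$, the rational prime $q$ is a Gaussian prime and $\mathbb Z_{q^m}[i]$ is a local ring with maximal ideal $\langle q\rangle$. Every nonzero zero-divisor $x$ then has a well-defined valuation $\nu(x)\in\{1,\dots,m-1\}$, and adjacency in $\Gamma$ is $\nu(x)+\nu(y)\geq m$, so in $\overline{\Gamma}$ it is $\nu(x)+\nu(y)\leq m-1$. The vertices with $\nu(x)=m-1$ are universal in $\Gamma$, hence isolated in $\overline{\Gamma}$, so
\[
V(H)=\{x:1\leq\nu(x)\leq m-2\},\qquad n:=|V(H)|=q^{2m-2}-q^2,
\]
the latter by counting units in the successive quotients.

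Next comes the key structural observation: the set $C=\{x:\nu(x)=1\}$ has size $q^{2m-4}(q^2-1)$ and is a clique in $H$ (since $1+1=2\leq m-1$ whenever $m\geq 3$), while every other $y\in V(H)$ satisfies $\nu(y)\leq m-2$ and is therefore adjacent to every vertex of $C$. This gives $\delta(H)\geq q^{2m-4}(q^2-1)-1$, hence for every edge $uv\in E(H)$,
\[
\deg(u)+\deg(v)\geq 2q^{2m-4}(q^2-1)-2.
\]
A short calculation reduces the required inequality $\deg(u)+\deg(v)\geq \frac{2n+1}{3}$ to $2q^{2m-4}(2q^2-3)+2q^2-7\geq 0$, which holds comfortably for every prime $q\geq 3$ and every $m\geq 3$.

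Finally, the minimum-degree bound forces $H$ to be bridgeless (and in particular almost bridgeless), and rules out $H\cong C_4,C_5$ since $n\geq q^4-q^2\geq 72$. Proposition 3.1 then yields that $L(H)$, and therefore $L(\overline{\Gamma(\mathbb Z_{q^m}[i])})$, is pancyclic. The main obstacle I anticipate is purely bookkeeping: setting up the valuation stratification cleanly and verifying the vertex counts $|V(H)|$ and $|C|$; once those formulas are recorded, the rest of the argument runs parallel to Theorem 5.1.
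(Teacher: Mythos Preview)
Your argument is correct, but it takes a different route from the paper. The paper does not mirror Theorem~5.1; instead it applies the $R$-graph criterion (Lemma~3.6). Concretely, it exhibits four valuation-one vertices $r=q$, $s=q^{2}+iq$, $t=q+iq^{2}$, $u=iq$ in the connected component $H$, observes that they form a $4$-cycle in $\overline{\Gamma}$, and that $r$ (having $\nu(r)=1$) is adjacent to every other vertex of $H$; Lemma~3.6 then gives pancyclicity of $L(H)\cong L(\overline{\Gamma(\mathbb Z_{q^m}[i])})$ immediately. Your approach via Proposition~3.1 is equally valid and rests on the same structural fact---that the valuation-one stratum $C$ dominates $H$---but packages it as a minimum-degree bound $\delta(H)\geq |C|-1$ and a degree-sum inequality, which costs you the exact counts $|V(H)|=q^{2m-2}-q^{2}$ and $|C|=q^{2m-4}(q^{2}-1)$ plus an arithmetic verification. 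The paper's $R$-graph route is shorter and avoids all the counting; your route, on the other hand, yields quantitative information (the stratification and $\delta(H)$) that the paper's argument does not, and keeps the two theorems of Section~5 methodologically uniform. One small point: your claim that ``the minimum-degree bound forces $H$ to be bridgeless'' deserves a one-line justification (e.g., any edge $uv$ lies on a triangle through some $w\in C\setminus\{u,v\}$, since $|C|\geq 3$), as $\delta\geq 2$ alone does not imply bridgelessness.
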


\begin{proof}
	In $\overline{\Gamma(\mathbb Z_{q^m}[i])}$, the connected component is assumed to be $H$. Taking four distinct vertices $r, s, t, u $ of $V(H)$ where $r = q$, $s = q^2+iq$, $t =q + iq^2$, $u = iq$ such that $rs, st, tu, ur$ have an edge in $E(H)$ and for every $v \in V(H)\backslash \{r, s, t, u\}$ is adjacent to $r$. So the graph $H$ is an $R-$graph. Thus by Lemma $3.7$, $L(H)$ is pancyclic and hence $L(\overline{\Gamma(\mathbb Z_{q^m}[i])})$ is pancyclic.

\end{proof}

\bibliographystyle{amsplain}

\end{document}